\newtheorem{theorem}{Theorem}
\newtheorem{lemma}{Lemma}
\newtheorem{corollary}{Corollary}
\newtheorem{remark}{Remark}
\newcommand{\wal}{{\rm wal}}
\newcommand{\icomp}{\mathtt{i}}
\newcommand{\rd}{\,\mathrm{d}}
\newcommand{\de}{\mathrm{e}}
\newcommand{\NN}{\mathbb{N}}
\newcommand{\LL}{\mathcal{L}}
\newcommand{\KK}{\mathcal{K}}
\newcommand{\HH}{\mathcal{H}}
\newcommand{\integer}{\mathbb{Z}}
\newcommand{\real}{\mathbb{R}}
\newcommand{\satop}[2]{\stackrel{\scriptstyle{#1}}{\scriptstyle{#2}}}
\newenvironment{proof}{\begin{trivlist}
    \item[\hskip\labelsep{\it Proof.}]}{$\hfill\Box$\end{trivlist}}
\begin{document}

\title{The decay of the Walsh coefficients of smooth functions}

\author{Josef Dick\thanks{School of Mathematics and Statistics, University of New
South Wales, Sydney 2052, Australia. ({\tt
josef.dick@unsw.edu.au})}}

\date{}

\maketitle

\begin{abstract}
We give upper bounds on the Walsh coefficients of functions for
which the derivative of order at least one has bounded variation of
fractional order. Further, we also consider the Walsh coefficients
of functions in periodic and non-periodic reproducing kernel Hilbert
spaces. A lower bound which shows that our results are best possible
is also shown.
\end{abstract}

\bigskip

\noindent
Mathematical Subject Classification (2000): Primary 42C10 \\
Keywords: Wavelet, Walsh series, Walsh coefficient, Sobolev space,
smooth function

\bigskip

\section{Introduction}

In this paper we analyze the decay of the Walsh coefficients of
smooth functions. Walsh functions $\wal_k: [0,1) \rightarrow \{1,
\omega_b, \ldots, \omega_b^{b-1}\}$, where $k$ is a non-negative
integer and $\omega_b = \de^{2\pi\icomp/b}$, were first introduced
in \cite{walsh} and further early results were obtained in
\cite{chrest, Fine}. See for example \cite{wade} for an overview. It
is well known that Walsh functions form a complete orthonormal
system of $L_2([0,1))$, see \cite{chrest,niederdrenk}.

In analogy to our aim for Walsh functions, consider Fourier series
for a moment: A classical result says that the $k$th Fourier
coefficient of an $r$ times differentiable function decays with
order $|k|^{-r}$. An analogous result for the Walsh coefficients of
$r$ times differentiable functions has been missing in the
literature and will be provided here.

The Walsh coefficients of functions which satisfy a H\"older
condition were already considered in \cite{Fine}. Here we consider
the decay of the Walsh coefficients of functions which satisfy even
stronger smoothness assumptions, i.e., have at least one smooth
derivative. It has long been known from \cite{Fine} that the only
absolutely continuous functions for which all Walsh coefficients
decay faster than $1/k$ are the constants. Here we refine this
result be showing that for $r$ times differentiable functions, the
Walsh coefficients decay with order $b^{-a_1-\cdots -
a_{\min(v,r)}}$, where $k = \kappa_1 b^{a_1-1} + \cdots + \kappa_v
b^{a_v-1}$ with $0 < \kappa_1, \ldots, \kappa_v < b$ and $a_1 >
\cdots > a_v > 0$. I.e., only the coefficients $k$ which have only
one non-zero digit in their base $b$ expansion decay with order
$1/k$, the others decay faster. We also prove a lower bound which
shows that this result is best possible.

The question of how the Walsh coefficients of smooth functions decay
plays a central role in numerical integration of smooth functions.
In \cite{Dick05,Dick06} this decay was implicitly used to give
explicit constructions of quasi-Monte Carlo rules which achieve the
optimal rate of convergence for the numerical integration of
functions with smoothness $r > 1$.

Throughout the paper  we use the following notation: We assume that
$b \ge 2$ is a natural number, and that $k \in \NN$ (where $\NN$
denotes the set of natural numbers) has base $b$ expansion $k =
\kappa_1 b^{a_1-1} + \cdots + \kappa_v b^{a_v-1}$, where $v \ge 1$,
$0 < \kappa_1,\ldots,  \kappa_v < b$, and $a_1 > \cdots > a_v > 0$.
For $k = 0$ we will assume that $v = 0$.

Let the real number $x \in [0,1)$ have base
$b$ representation $x = \frac{x_1}{b} + \frac{x_2}{b^2} + \cdots$, with $0 \le x_i < b$ and
where infinitely many $x_i$ are different from $b-1$. For $k \in \NN$ we define the $k$th Walsh function by
$$\wal_k(x) = \omega_b^{\kappa_1 x_{a_1} + \cdots + \kappa_v x_{a_v}},$$ where
$\omega_b = \de^{2\pi \icomp/b}$. For $k = 0$ we set $\wal_0(x) = 1$.

For a function $f:[0,1] \rightarrow \real$ we define the $k$th Walsh
coefficient of $f$ by $$\hat{f}(k) = \int_0^1 f(x)
\overline{\wal_k(x)} \rd x$$ and we can form the Walsh series $$f(x)
\sim \sum_{k=0}^\infty \hat{f}(k) \wal_k(x).$$

Among other things, it was shown in \cite{Dick06} that if a function $f:[0,1]\rightarrow \real$ has $r-1$ derivatives for which $f^{(r-1)}$ satisfies a Lipschitz condition, then $|\hat{f}(k)| \le  C_{r} b^{-a_1-\cdots - a_{\min(r,v)}}$ for some constant $C_{r} > 0$ independent of $f$ and $k$. An explicit constant was also given in \cite{Dick06}.

In this paper we improve upon the results in \cite{Dick05} and \cite{Dick06} in several ways. We improve the constant $C_r$ mentioned above and obtain also a constant for $r = \infty$ (in \cite{Dick06} we have $C_r \rightarrow \infty$ for $r \rightarrow \infty$). (In the context of numerical integration this is interesting as we want to know how the
integration error depends on the smoothness.) If the function and all its derivatives are periodic then the result
can be strengthened. This was already implicitly used in \cite{Dick05}, but will be explicitly shown here.

We need the following lemma which was first shown in \cite{Fine} and
appeared in many other papers (see for example \cite{Dick06} for a more general version). The following notation will be used throughout the paper: $k' = k - \kappa_1 b^{a_1-1}$ and hence $0 \le k' < b^{a_1-1}$.

\begin{lemma}\label{lem_Jk}
For $k \in \NN$ let $J_k(x) =
\int_0^x \overline{\wal_k(t)} \rd t$. Then
\begin{eqnarray*}
J_k(x) & = & b^{-a_1} \Bigg((1-\omega_b^{-\kappa_1})^{-1}
\overline{\wal_{k'}(x)} + (1/2 + (\omega_b^{-\kappa_1}-1)^{-1})
\overline{\wal_k(x)} \\ && \qquad + \sum_{c=1}^\infty \sum_{\vartheta =
1}^{b-1} b^{-c} (\omega_b^\vartheta -1)^{-1} \overline{\wal_{\vartheta
b^{a_1+c-1}+k}(x)} \Bigg).
\end{eqnarray*}
For $k = 0$, i.e., $J_0(x) = \int_0^x 1 \rd t = x$, we have
\begin{equation}\label{eq_J0}
J_0(x) = 1/2 + \sum_{c=1}^\infty \sum_{\vartheta=1}^{b-1} b^{-c}
(\omega_b^\vartheta-1)^{-1} \overline{\wal_{\vartheta b^{c-1}}(x)}.
\end{equation}
\end{lemma}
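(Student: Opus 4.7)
The plan is first to establish \eqref{eq_J0} by expanding $x=\sum_{i=1}^\infty x_i b^{-i}$ digit-by-digit, and then to reduce the general formula to that identity by integrating $\overline{\wal_k}$ piecewise over $b$-adic sub-intervals on which it is constant.

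For the case $k=0$, each digit $x_i:[0,1)\to\{0,\ldots,b-1\}$ admits the discrete Fourier expansion $x_i = \tfrac{b-1}{2}+\sum_{\vartheta=1}^{b-1}(\omega_b^{-\vartheta}-1)^{-1}\omega_b^{\vartheta x_i}$, and $\omega_b^{\vartheta x_i}=\wal_{\vartheta b^{i-1}}(x)$. Summing $b^{-i}x_i$ over $i$ yields a Walsh series for $x$; the constant term is $\sum_i b^{-i}\cdot\tfrac{b-1}{2}=\tfrac12$, and the substitution $\vartheta\mapsto b-\vartheta$ converts $\wal$ into $\overline{\wal}$ and $(\omega_b^{-\vartheta}-1)^{-1}$ into $(\omega_b^{\vartheta}-1)^{-1}$, giving exactly \eqref{eq_J0}.

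For general $k\in\NN$, the key observation is that $\int_{mb^{-(a_1-1)}}^{(m+1)b^{-(a_1-1)}}\overline{\wal_k(t)}\rd t=0$ for every integer $m\ge 0$: on such a cell $\overline{\wal_{k'}}$ is constant while $\overline{\wal_{\kappa_1 b^{a_1-1}}(t)}=\omega_b^{-\kappa_1 s}$ on the $s$-th sub-interval of length $b^{-a_1}$, and $\sum_{s=0}^{b-1}\omega_b^{-\kappa_1 s}=0$ since $\kappa_1\not\equiv 0\pmod b$. Hence $J_k$ vanishes at every multiple of $b^{-(a_1-1)}$. For $x$ lying in $[mb^{-(a_1-1)}+sb^{-a_1},mb^{-(a_1-1)}+(s+1)b^{-a_1})$, set $w:=x-mb^{-(a_1-1)}-sb^{-a_1}\in[0,b^{-a_1})$ and split $J_k(x)$ into three pieces: the initial portion is zero, the sum over the first $s$ sub-intervals of length $b^{-a_1}$ is a finite geometric series evaluating to $b^{-a_1}\overline{\wal_{k'}(x)}(\omega_b^{-\kappa_1 s}-1)/(\omega_b^{-\kappa_1}-1)$, and the final piece equals $w\,\overline{\wal_k(x)}$. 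Using $\overline{\wal_{k'}(x)}\,\omega_b^{-\kappa_1 s}=\overline{\wal_k(x)}$, the geometric sum contributes the $\overline{\wal_{k'}(x)}$ term with coefficient $b^{-a_1}(1-\omega_b^{-\kappa_1})^{-1}$ together with a partial $\overline{\wal_k(x)}$ term of coefficient $b^{-a_1}(\omega_b^{-\kappa_1}-1)^{-1}$.

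Finally, $b^{a_1}w\in[0,1)$ has base-$b$ digits $x_{a_1+1},x_{a_1+2},\ldots$, so applying \eqref{eq_J0} to $b^{a_1}w$ and observing that $\wal_{\vartheta b^{c-1}}(b^{a_1}w)=\wal_{\vartheta b^{a_1+c-1}}(x)$ yields $w=b^{-a_1}\bigl[\tfrac12+\sum_{c\ge 1}\sum_{\vartheta=1}^{b-1}b^{-c}(\omega_b^\vartheta-1)^{-1}\overline{\wal_{\vartheta b^{a_1+c-1}}(x)}\bigr]$. Multiplying by $\overline{\wal_k(x)}$ introduces an extra $\tfrac12\, b^{-a_1}\overline{\wal_k(x)}$ contribution, which combines with the earlier $(\omega_b^{-\kappa_1}-1)^{-1}$ piece to produce the coefficient $\tfrac12+(\omega_b^{-\kappa_1}-1)^{-1}$; each series term becomes $\overline{\wal_{\vartheta b^{a_1+c-1}+k}(x)}$ since the nonzero digit $\vartheta$ at position $a_1+c>a_1$ does not collide with any digit of $k$, so no carrying occurs in the index arithmetic. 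The main obstacle is purely bookkeeping, keeping the digit positions straight through the three decompositions, rather than any nontrivial analytic step.
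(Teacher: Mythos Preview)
Your argument is correct. The paper itself does not supply a proof of this lemma: it simply attributes the result to Fine~\cite{Fine} and refers to~\cite{Dick06} for a more general version. So there is nothing to compare your approach against within the paper.

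Your digit-by-digit derivation of~\eqref{eq_J0} via the finite Fourier expansion $s=\tfrac{b-1}{2}+\sum_{\vartheta=1}^{b-1}(\omega_b^{-\vartheta}-1)^{-1}\omega_b^{\vartheta s}$ on $\{0,\dots,b-1\}$ is clean and correct (the coefficient computation reduces to the standard identity $\sum_{s=0}^{b-1} s z^s = b/(z-1)$ for a nontrivial $b$-th root of unity $z$). For general $k$, your decomposition of $[0,x)$ into an initial block of length $mb^{-(a_1-1)}$ on which the integral vanishes, then $s$ full sub-intervals of length $b^{-a_1}$ giving a geometric sum, then a remainder of length $w$, followed by applying~\eqref{eq_J0} to $b^{a_1}w$, is exactly the right picture; the identification $\overline{\wal_{\vartheta b^{c-1}}(b^{a_1}w)}=\overline{\wal_{\vartheta b^{a_1+c-1}}(x)}$ and the no-carry observation for the product $\overline{\wal_{\vartheta b^{a_1+c-1}}(x)}\cdot\overline{\wal_k(x)}$ are both valid since $a_1+c>a_1$. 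All series involved are absolutely convergent (bounded termwise by a constant times $b^{-c}$), so the pointwise manipulations are justified.
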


We also need the following elementary lemma.
\begin{lemma}\label{lem_elementary}
For any $0 < \kappa < b$ we have $$|1 - \omega_b^{-\kappa}|^{-1} \le \frac{1}{2 \sin \frac{\pi}{b}} \quad \mbox{and} \quad |1/2 + (\omega_b^{-\kappa}-1)^{-1}| \le \frac{1}{2 \sin\frac{\pi}{b}}.$$
\end{lemma}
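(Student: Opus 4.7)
My plan is to reduce both inequalities to elementary trigonometric bounds via the half-angle factorization
$$1 - \de^{-\icomp\theta} = 2\icomp\sin(\theta/2)\,\de^{-\icomp\theta/2}.$$
Taking $\theta = 2\pi\kappa/b$ immediately gives $|1-\omega_b^{-\kappa}| = 2\sin(\pi\kappa/b)$. To derive the first bound I then minimize $\sin(\pi\kappa/b)$ over $\kappa\in\{1,\ldots,b-1\}$: by symmetry of $\sin$ about $\pi/2$ on $(0,\pi)$, the minimum is attained at $\kappa=1$ and $\kappa=b-1$, both giving $\sin(\pi/b)$. Inverting yields $|1-\omega_b^{-\kappa}|^{-1} \le 1/(2\sin(\pi/b))$.

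For the second inequality, I first put $(\omega_b^{-\kappa}-1)^{-1}$ in closed form. Inverting the same factorization and using $1/\icomp = -\icomp$ gives
$$(\omega_b^{-\kappa}-1)^{-1} = \frac{\icomp\,\de^{\icomp\pi\kappa/b}}{2\sin(\pi\kappa/b)} = -\frac{1}{2} + \frac{\icomp}{2}\cot(\pi\kappa/b),$$
where the second equality comes from expanding $\icomp\,\de^{\icomp\pi\kappa/b} = -\sin(\pi\kappa/b) + \icomp\cos(\pi\kappa/b)$ and splitting real and imaginary parts. Adding $1/2$ kills the real part, leaving $1/2 + (\omega_b^{-\kappa}-1)^{-1} = \tfrac{\icomp}{2}\cot(\pi\kappa/b)$. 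The modulus is therefore $\tfrac{1}{2}|\cot(\pi\kappa/b)|$, and combining $|\cos(\pi\kappa/b)|\le 1$ with the lower bound $\sin(\pi\kappa/b)\ge\sin(\pi/b)$ already established closes the argument.

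No real obstacle is expected; the lemma is purely computational. The only mild care is the sign bookkeeping when extracting $\cot$ from the reciprocal, but the symmetry $\kappa \leftrightarrow b-\kappa$ (which negates $\cot$ while preserving its absolute value) makes the location of the extremum, and hence the final bound, transparent.
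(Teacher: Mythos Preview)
Your argument is correct. The half-angle identity gives $|1-\omega_b^{-\kappa}|=2\sin(\pi\kappa/b)$, and your closed form $1/2+(\omega_b^{-\kappa}-1)^{-1}=\tfrac{\icomp}{2}\cot(\pi\kappa/b)$ is exactly right, so both bounds reduce to $\sin(\pi\kappa/b)\ge\sin(\pi/b)$ on $\kappa\in\{1,\ldots,b-1\}$.

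As for comparison: the paper does not actually supply a proof of this lemma---it is stated as an elementary fact and left to the reader---so your write-up simply fills in what the author omitted. There is nothing to contrast; your approach is the natural one.
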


We introduce some further notation which will be used throughout the
paper: For $v > 1$ let $k'' = k' - \kappa_2 b^{a_2-1}$, and hence $0
\le k'' < b^{a_2-1}$. For $l \in \NN$ let $l = \lambda_1 b^{d_1-1} +
\cdots + \lambda_w b^{d_w-1}$, where $w \ge 1$, $0 <
\lambda_1,\ldots,  \lambda_w < b$, and $d_1 > \cdots > d_w > 0$.
Further let $l' = l - \lambda_1 b^{d_1-1}$ and hence $0 \le l' <
b^{d_1-1}$. For $w > 1$ let $l'' = l' - \lambda_2 b^{d_2-1}$, and
hence $0 \le l'' < b^{d_2-1}$.

\section{On the Walsh coefficients of polynomials and power series}

In the following we obtain bounds on the Walsh coefficients of
monomials $x^r$.  Let
\begin{equation*}
\chi_{r,v}(a_1,\ldots, a_v;\kappa_1,\ldots,\kappa_v) = \int_0^1 x^r
\overline{\wal_{k}(x)} \rd x.
\end{equation*}
For $k = 0$ we define $\chi_{r,0}$, which is given by $$\chi_{r,0} =
\int_0^1 x^r \rd x = \frac{1}{r+1}.$$

We know from \cite[Lemma~3.7]{Dick06} that the Walsh coefficients of
$x^r$ are $0$ if $v > r$, hence we have $\chi_{r,v} = 0$ for $v > r$.

The Walsh series for $x$ is already known from Lemma~\ref{lem_Jk},
thus (note that we need to take the complex conjugate of (\ref{eq_J0}) to obtain the Walsh series for $x$)
\begin{equation}\label{eq_u11}
\chi_{1,1}(a_1;\kappa_1) = - b^{-a_1} (1- \omega_b^{-\kappa_1})^{-1}.
\end{equation}
It can be checked that $|\chi_{1,1}| \le \frac{1}{2}$. Indeed, we
always have
\begin{equation*}
|\chi_{r,v}(a_1,\ldots,a_v;\kappa_1,\ldots, \kappa_v)| \le \int_0^1 x^r
|\overline{\wal_k(x)}|\rd x = \int_0^1 x^r \rd x = \frac{1}{r+1}
\end{equation*}
for all $r,v \ge 0$.

We obtain a recursive formula for the $\chi_{r,v}$ using integration by
parts, namely
\begin{equation}\label{eq_xrJk}
\int_0^1 x^r \overline{\wal_k(x)} \rd x =  J_k(x) x^r \mid_0^1 - r
\int_0^1 x^{r-1} J_k(x) \rd x = - r \int_0^1 x^{r-1} J_k(x) \rd x.
\end{equation}

Using Lemma~\ref{lem_Jk} and (\ref{eq_xrJk}) we obtain for $1 \le v
\le r$ and $r > 1$ that
\begin{eqnarray}\label{eq_recursion}
\lefteqn{ \chi_{r,v}(a_1,\ldots, a_v; \kappa_1,\ldots, \kappa_v) } \\ &
= & -r b^{-a_1} \bigg((1-\omega_b^{-\kappa_1})^{-1}
\chi_{r-1,v-1}(a_2,\ldots,a_v;\kappa_2,\ldots, \kappa_v) \nonumber \\
&& \qquad\qquad + (1/2+(\omega_b^{-\kappa_1}-1)^{-1}) \chi_{r-1,v}(a_1,
\ldots, a_v;\kappa_1, \ldots, \kappa_v) \nonumber \\ && \qquad\qquad
+ \sum_{c=1}^\infty \sum_{\vartheta=1}^{b-1} b^{-c} (\omega_b^\vartheta-1)^{-1}
\chi_{r-1,v+1}(a_1+c, a_1,\ldots,a_v; \vartheta,
\kappa_1,\ldots,\kappa_v)\bigg). \nonumber
\end{eqnarray}

From (\ref{eq_recursion}) we can obtain
\begin{equation*}
\chi_{r,r}(a_1,\ldots, a_r;\kappa_1,\ldots, \kappa_r) = (-1)^r r!
b^{-a_1-\cdots - a_r} \prod_{w=1}^r (1-\omega_b^{-\kappa_w})^{-1}
\end{equation*}
and, with a bit more effort,
\begin{eqnarray*}
\lefteqn{ \chi_{r,r-1}(a_1,\ldots,a_{r-1};\kappa_1,\ldots,
\kappa_{r-1}) } \\ & = & (-1)^{r} r! b^{-a_1-\cdots - a_{r-1}}
\prod_{w=1}^{r-1} (1-\omega_b^{-\kappa_w})^{-1} \\ && \times (-1/2 +
\sum_{w=1}^{r-1} (1/2+(\omega_b^{-\kappa_w}-1)^{-1}) b^{-a_w}),
\end{eqnarray*}
for all $r \ge 1$.

In principle we can obtain all values of $\chi_{r,v}$ recursively using
(\ref{eq_recursion}). We calculated already $\chi_{r,v}$ for $v = r,
r-1$ and we could continue doing so for $v = r-2, \ldots, 1$. But
the formulae become increasingly complex, so we only prove a bound
on them.

For any $r \ge 0$ and a non-negative integer $k$ we define
\begin{equation*}
\mu_r(k) = \left\{\begin{array}{ll} 0 & \mbox{for } r = 0, k \ge 0,
\\ 0 & \mbox{for } k = 0, r \ge 0, \\ a_1 + \cdots + a_v & \mbox{for
} 1 \le v \le r, \\ a_1 + \cdots + a_r & \mbox{for } v > r.
\end{array} \right.
\end{equation*}

\begin{lemma}\label{lem_bound_Walsh_xr}
For $1 \le r < v$ we have $\chi_{r,v} = 0$ and for any $1 \le v \le r$ we have
\begin{eqnarray*}
\lefteqn{ |\chi_{r,v}(a_1,\ldots, a_v;\kappa_1,\ldots, \kappa_v)| } \\ & \le &  \min_{0 \le u \le v} b^{-\mu_u(k)} \frac{r!}{(r-u+1)!}  \frac{3^{\min(1,u)}}{(2\sin\frac{\pi}{b})^u} \left(1 + \frac{1}{b}+ \frac{1}{b(b+1)} \right)^{\max(0, u-1)}.
\end{eqnarray*}

\end{lemma}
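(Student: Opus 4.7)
The plan is to prove the bound by induction on $u\in\{0,1,\ldots,v\}$, using the recursion~(\ref{eq_recursion}) together with Lemma~\ref{lem_elementary}. The $u=0$ case is immediate from $|\chi_{r,v}|\le\int_0^1 x^r\rd x=1/(r+1)$, which matches the claim under the convention that empty products equal $1$. Writing $C:=1+1/b+1/(b(b+1))$ for brevity, the target bound for parameter $u$ reads $b^{-a_1-\cdots-a_u}\,(r!/(r-u+1)!)\,(3^{\min(1,u)}/(2\sin(\pi/b))^u)\,C^{\max(0,u-1)}$.

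The base case $u=1$ is treated separately because $3^{\min(1,u)}$ jumps from $1$ to $3$ at $u=1$. Apply~(\ref{eq_recursion}) and bound each sub-$\chi$ by the trivial estimate $1/r$. Lemma~\ref{lem_elementary} bounds each of the three types of coefficients by $1/(2\sin(\pi/b))$, so each of the first two terms contributes $b^{-a_1}/(2\sin(\pi/b))$; for the third term the identities $\sum_{c\ge 1}b^{-c}=1/(b-1)$ and $\sum_{\vartheta=1}^{b-1}1=b-1$ collapse the double sum to the same value. Summing gives the desired $3b^{-a_1}/(2\sin(\pi/b))$.

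For $u\ge 2$, apply~(\ref{eq_recursion}) and insert the $(u-1)$-bound (available by the induction) into each sub-$\chi$. The first term, whose sub-$\chi$ has $a$-indices $(a_2,\ldots,a_v)$, yields exactly the target divided by $C$. The second term's sub-$\chi$ retains $a_1$, so its $(u-1)$-bound carries $b^{-a_1-\cdots-a_{u-1}}$; combined with the outer $b^{-a_1}$ this produces $b^{-2a_1-a_2-\cdots-a_{u-1}}$, and the integer gap $a_1-a_u\ge u-1$ converts this to at most $b^{-(u-1)}$ times the first-term expression. For the third term the double sum telescopes via $\sum_{c\ge 1}b^{-2c}=1/(b^2-1)$ and $\sum_{\vartheta=1}^{b-1}1=b-1$ to a factor $1/(b+1)$; the sub-$\chi$'s leading $a$-index $a_1+c$ contributes an extra $b^{-a_1-c}$, and the gaps $a_1\ge a_{u-1}+(u-2)$ and $a_1\ge a_u+(u-1)$ bound the contribution by $b^{-(2u-3)}/(b+1)$ times the first-term expression. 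Summing, the three terms are at most $[1+b^{-(u-1)}+b^{-(2u-3)}/(b+1)]$ times the first-term expression, and this bracket is at most $C$ (with equality at $u=2$), so the induction closes.

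The main obstacle is the exponent bookkeeping in the second and third terms: each $(u-1)$-bound re-introduces a power of $a_1$ that has already been extracted by the outer $b^{-a_1}$ (or, in the third term, by $b^{-(a_1+c)}$ inside the sub-$\chi$), and one must exploit the strict decrease $a_1>\cdots>a_v$ of integers to turn this wasted power into slack of size $b^{-(u-1)}$ or $b^{-(2u-3)}/(b+1)$. The constant $C$ is calibrated precisely so that $u=2$ saturates the bracket inequality; larger $u$ is comfortable since both extra summands shrink by further factors of $1/b$. The boundary cases $v=r$ and $v=r-1$, where $\chi_{r-1,v+1}$ (and possibly $\chi_{r-1,v}$) vanish by the first part of the lemma, only simplify the argument.
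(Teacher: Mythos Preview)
Your argument is correct and follows essentially the same route as the paper: both apply the recursion~(\ref{eq_recursion}), bound the three groups of terms via Lemma~\ref{lem_elementary} and the $(u-1)$-case, and close the induction by checking that the resulting bracket is at most $C=1+1/b+1/(b(b+1))$. The only differences are organizational: the paper runs the induction on $r$ (and then handles each $u$), whereas you run it on $u$ (for all $r,v$ simultaneously); and the paper bounds the bracket more crudely by $1+b^{a_2-a_1}+b^{a_2-a_1}/(b+1)$ using only $a_1>a_2$, while you track the sharper $1+b^{-(u-1)}+b^{-(2u-3)}/(b+1)$ --- both reduce to $C$ at $u=2$ and are below it for larger $u$.
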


\begin{proof}
The first result was already shown in \cite{Dick06}.

For the second result we use induction on $r$. We have
already shown the result for $r = v =  1$.

Now assume that
\begin{eqnarray*}
\lefteqn{ |\chi_{r-1,v}(a_1,\ldots, a_v;\kappa_1,\ldots, \kappa_v)| } \\ & \le &  \min_{0
\le u \le v}b^{-\mu_u(k)}  \frac{(r-1)!}{(r-u)!}
\frac{3^{\min(1,u)}}{(2\sin\frac{\pi}{b})^u} \left(1 + \frac{1}{b} + \frac{1}{b(b+1)} \right)^{\max(0,u-1)}.
\end{eqnarray*}
We show that the result holds for $r$. We have already shown that
$|\chi_{r,v}| \le \frac{1}{r+1}$, which proves the result for $u = 0$.

By taking the absolute value of (\ref{eq_recursion}) and using the triangular inequality we obtain
\begin{eqnarray}\label{eq_recursion_abv}
\lefteqn{ |\chi_{r,v}(a_1,\ldots, a_v; \kappa_1,\ldots, \kappa_v)| } \\ &
\le & r b^{-a_1} \bigg(|1-\omega_b^{-\kappa_1}|^{-1}
|\chi_{r-1,v-1}(a_2,\ldots,a_v;\kappa_2,\ldots, \kappa_v)| \nonumber \\
&& \qquad + |1/2+(\omega_b^{-\kappa_1}-1)^{-1}| |\chi_{r-1,v}(a_1,
\ldots, a_v;\kappa_1, \ldots, \kappa_v)| \nonumber \\ && \qquad
+ \sum_{c=1}^\infty \sum_{\vartheta=1}^{b-1} b^{-c} |\omega_b^\vartheta-1|^{-1}
|\chi_{r-1,v+1}(a_1+c, a_1,\ldots,a_v; \vartheta,
\kappa_1,\ldots,\kappa_v)|\bigg). \nonumber
\end{eqnarray}

Using Lemma~\ref{lem_elementary}, $|\chi_{r-1,v}| \le \frac{1}{r}$ and $\sum_{c=1}^\infty b^{-c} = \frac{1}{b-1}$, we obtain from (\ref{eq_recursion_abv}) that
\begin{equation*}
|\chi_{r,v}(a_1,\ldots, a_v;\kappa_1,\ldots, \kappa_v)| \le \frac{3 b^{-a_1}}{2 \sin\frac{\pi}{b}},
\end{equation*}
which proves the bound for $u = 1$.

To prove the bound for  $1 < u \le v$ we proceed in the same manner. Using Lemma~\ref{lem_elementary}, and
\begin{eqnarray*}
\lefteqn{|\chi_{r-1,v}(a_1, \ldots, a_v; \kappa_1, \ldots, \kappa_v)| } \\ & \le & b^{-\mu_{u-1}(k)}  \frac{(r-1)!}{(r-u+1)!}
\frac{3^{\min(1,u-1)}}{(2\sin\frac{\pi}{b})^{u-1}} \left(1 + \frac{1}{b} + \frac{1}{b(b+1)} \right)^{\max(0,u-2)},
\end{eqnarray*}
we obtain
\begin{eqnarray*}
\lefteqn{ |\chi_{r,v}(a_1,\ldots, a_v; \kappa_1,\ldots, \kappa_v)| } \\ &
\le & \frac{r b^{-a_1}}{2 \sin\frac{\pi}{b}} \bigg(
|\chi_{r-1,v-1}(a_2,\ldots,a_v;\kappa_2,\ldots, \kappa_v)| \\ && \qquad\qquad + |\chi_{r-1,v}(a_1,
\ldots, a_v;\kappa_1, \ldots, \kappa_v)| \\ && \qquad\qquad
+ \sum_{c=1}^\infty \sum_{\vartheta=1}^{b-1} b^{-c}
|\chi_{r-1,v+1}(a_1+c, a_1,\ldots,a_v; \vartheta,
\kappa_1,\ldots,\kappa_v)|\bigg) \\ & \le & b^{-\mu_u(k)} \frac{r!}{(r-u+1)!}  \frac{3^{\min(1,u)}}{(2\sin\frac{\pi}{b})^u} \left(1 + \frac{1}{b}+ \frac{1}{b(b+1)} \right)^{\max(0, u-2)} \\ && \qquad\qquad \times \left(1 + b^{a_2-a_1} + \frac{b^{a_2-a_1}}{b+1}\right) \\ & \le & b^{-\mu_u(k)} \frac{r!}{(r-u+1)!}  \frac{3^{\min(1,u)}}{(2\sin\frac{\pi}{b})^u} \left(1 + \frac{1}{b}+ \frac{1}{b(b+1)} \right)^{\max(0, u-1)},
\end{eqnarray*}
as $\sum_{c=1}^\infty \sum_{\vartheta = 1}^{b-1} b^{-2c} = \frac{1}{b+1}$ and  $a_1 > a_2$. Thus the result follows.
\end{proof}

Let now $f(x) = f_0 + f_1 x + f_2 x^2 + \cdots$. The $k$th Walsh
coefficient of $f$ is given by
\begin{eqnarray*}
\hat{f}(k) & = & \int_0^1 f(x) \overline{\wal_k(x)} \rd x \\ & = &
\sum_{r=0}^\infty f_r \int_0^1 x^r \overline{\wal_k(x)} \rd x \\ & =
& \sum_{r = v}^\infty f_r \chi_{r,v}(a_1,\ldots, a_v;\kappa_1,\ldots,
\kappa_v).
\end{eqnarray*}
We can estimate the $k$th Walsh coefficient by
\begin{eqnarray*}
|\hat{f}(k)| & = & \left|\sum_{r=v}^\infty \chi_{r,v}(a_1,\ldots,
a_v;\kappa_1,\ldots, \kappa_v) f_r \right| \\ &
\le & \sum_{r=v}^\infty |\chi_{r,v}(a_1,\ldots,
a_v;\kappa_1,\ldots,\kappa_v)| |f_r| \\ & \le &
\sum_{r=v}^\infty |f_r| \min_{0 \le u \le v} b^{-\mu_u(k)} \frac{r!}{(r-u+1)!}
\frac{3^{\min(1,u)}}{(2\sin\frac{\pi}{b})^u} \\ && \qquad\qquad\qquad\qquad \times \left(1 + \frac{1}{b} + \frac{1}{b(b+1)}\right)^{\max(0,u-1)} \\
& \le  & \min_{0 \le u \le v} b^{-\mu_u(k)}
\frac{3^{\min(1,u)}}{(2\sin\frac{\pi}{b})^u} \left(1 + \frac{1}{b} + \frac{1}{b(b+1)}\right)^{\max(0,u-1)} \\ && \qquad\qquad\qquad\qquad \times \sum_{r=v}^\infty
\frac{r! |f_r|}{(r-u+1)!}.
\end{eqnarray*}

Hence we have shown the following theorem.
\begin{theorem}\label{thm_deltainfty}
Let $f(x) = f_0 + f_1 x + f_2 x^2 + \cdots$ and let $k \in \NN$. Then
we have
\begin{equation*}
|\hat{f}(k)| \le \min_{0 \le u \le v} b^{-\mu_u(k)}
\frac{3^{\min(1,u)}}{(2\sin\frac{\pi}{b})^u} \left(1+\frac{1}{b} + \frac{1}{b(b+1)}\right)^{\max(0,u-1)} \sum_{r=v}^\infty
\frac{r! |f_r|}{(r-u+1)!}.
\end{equation*}
\end{theorem}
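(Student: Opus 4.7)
The plan is essentially to expand $f$ termwise and apply the monomial bound from Lemma~\ref{lem_bound_Walsh_xr} to each coefficient $\chi_{r,v}$. Concretely, first I would write
\[
\hat{f}(k) = \int_0^1 f(x) \overline{\wal_k(x)} \rd x = \sum_{r=0}^\infty f_r \int_0^1 x^r \overline{\wal_k(x)} \rd x = \sum_{r=0}^\infty f_r \chi_{r,v}(a_1,\ldots,a_v;\kappa_1,\ldots,\kappa_v),
\]
where the interchange of summation and integration is justified whenever the series $\sum_r f_r x^r$ converges on $[0,1]$ well enough that the final bound $\sum_{r=v}^\infty r! |f_r|/(r-u+1)!$ is finite (otherwise the stated estimate is vacuous).

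Next I would use the vanishing half of Lemma~\ref{lem_bound_Walsh_xr}, namely $\chi_{r,v} = 0$ for $r < v$, to truncate the sum so it starts at $r = v$. Then the triangle inequality gives
\[
|\hat{f}(k)| \le \sum_{r=v}^\infty |f_r| \, |\chi_{r,v}(a_1,\ldots,a_v;\kappa_1,\ldots,\kappa_v)|.
\]

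Now I plug in the quantitative bound from Lemma~\ref{lem_bound_Walsh_xr}: for every $0 \le u \le v$,
\[
|\chi_{r,v}| \le b^{-\mu_u(k)} \frac{r!}{(r-u+1)!} \frac{3^{\min(1,u)}}{(2\sin\frac{\pi}{b})^u} \left(1 + \tfrac{1}{b} + \tfrac{1}{b(b+1)}\right)^{\max(0,u-1)}.
\]
Since the factor $b^{-\mu_u(k)} \cdot 3^{\min(1,u)}/(2\sin\frac{\pi}{b})^u \cdot (1 + 1/b + 1/(b(b+1)))^{\max(0,u-1)}$ depends only on $u$, $k$, and $b$ (not on $r$), I can pull it outside the sum over $r$ and only the combinatorial factor $r!/(r-u+1)!$ stays inside. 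Finally, I take the minimum over $u \in \{0,1,\ldots,v\}$ after this factorization, yielding the claimed bound.

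The only genuinely delicate point is the interchange of sum and integral at the very first step; everything afterwards is a mechanical combination of Lemma~\ref{lem_bound_Walsh_xr} with the triangle inequality. Since taking $\min_u$ outside the sum is valid (we are bounding a fixed quantity by every member of a family of upper bounds), there is no subtlety in the final optimization step.
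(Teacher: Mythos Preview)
Your proposal is correct and follows essentially the same route as the paper: expand $\hat{f}(k)$ as $\sum_{r\ge v} f_r\,\chi_{r,v}$ (using $\chi_{r,v}=0$ for $r<v$), apply the triangle inequality, invoke the bound of Lemma~\ref{lem_bound_Walsh_xr} for each $|\chi_{r,v}|$, and then pull the $r$-independent factor outside and minimize over $u$. Your explicit remark about the interchange of sum and integral (and about pulling the $\min_u$ outside) is a welcome addition that the paper leaves implicit.
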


\begin{remark}
This result cannot be directly obtained from \cite{Dick06}, as there
the constant for a power series would be infinite.
\end{remark}

The bound in the theorem makes of course only sense for $u$ for
which $\sum_{r=v}^\infty \frac{r! |f_r|}{(r-u+1)!}$ is finite. We
give some examples:
\begin{itemize}
\item For $f \in C^\infty([0,1])$ we have $f^{(r)}(0) = r! f_r$. If $|f^{(r)}(0)|$ grows exponentially
(like for $f(x) = \de^{a x}$ with $a > 1$), then $\sum_{r=v}^\infty \frac{|f^{(r)}(0)|}{(r-v+1)!}$ will
be finite for any $v \in \NN$. The theorem implies that the Walsh coefficients decay with order
$\mathcal{O}(b^{-\mu_v(k)})$.

\item Using Sterling's formula we obtain that $\frac{r!}{(r-v+1)!} \approx (r-v+1)^{v-1}$ as $r$ tends to $\infty$.
For $f(x) = \frac{1}{1-cx}$ with $0 < c < 1$ we have $f_r = c^r$.
In this case we have $$\sum_{r=v}^\infty \frac{r! |f_r|}{(r-v+1)!}
\approx \sum_{r=v}^\infty (r-v+1)^{v-1} c^r = c^{v-1}
\sum_{r=1}^\infty r^{v-1} c^r < \infty,$$ for all $v \in \NN$. The
theorem implies that the Walsh coefficients decay with order
$\mathcal{O}(b^{-\mu_v(k)})$.
\end{itemize}

For $f \in C^\infty([0,1])$ with $f(x) = \sum_{r=0}^\infty f_r x^r$
we define the semi-norm
\begin{equation*}
\|f\| = \sum_{r=1}^\infty |f_r| = \sum_{r = 1}^\infty
\frac{|f^{(r)}(0)|}{r!}.
\end{equation*}

Then the $(v-1)$th derivative of $f$ is given by $$f^{(v-1)}(x) =
\sum_{r=0}^\infty  \frac{(v-1+r)!}{r!} f_{v-1+r} x^r = \sum_{r =
v-1}^\infty \frac{r!}{(r-v+1)!} f_r x^{r-v+1}$$ and
\begin{equation*}
\|f^{(v-1)}\| = \sum_{r=v}^\infty \frac{r! |f_r|}{(r-v+1)!} =
\sum_{r=v}^\infty \frac{|f^{(r)}(0)|}{(r-v+1)!}.
\end{equation*}

Hence we obtain the following corollary from
Theorem~\ref{thm_deltainfty}.
\begin{corollary}
Let $f \in C^\infty([0,1])$ with $\|f^{(z)}\| < \infty$ for all $z
\in \NN_0$. Then for every $k \in \NN$ we have
\begin{equation*}
|\hat{f}(k)| \le b^{-\mu_v(k)}
\frac{3}{(2\sin\frac{\pi}{b})^v} \left(1+\frac{1}{b} + \frac{1}{b(b+1)}\right)^{v-1}  \|f^{(v-1)}\|.
\end{equation*}
\end{corollary}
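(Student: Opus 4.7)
The plan is to derive the corollary directly from Theorem~\ref{thm_deltainfty} by specializing the minimum over $u$ to a single well-chosen value. Since Theorem~\ref{thm_deltainfty} gives
\begin{equation*}
|\hat{f}(k)| \le \min_{0 \le u \le v} b^{-\mu_u(k)} \frac{3^{\min(1,u)}}{(2\sin\frac{\pi}{b})^u} \left(1+\frac{1}{b} + \frac{1}{b(b+1)}\right)^{\max(0,u-1)} \sum_{r=v}^\infty \frac{r! |f_r|}{(r-u+1)!},
\end{equation*}
any admissible $u$ yields an upper bound, and the obvious choice here is $u = v$ because then the inner sum becomes precisely the quantity the author has just labelled $\|f^{(v-1)}\|$.

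The key steps I would carry out are the following. First, verify that the Taylor series hypothesis is in force: since $f \in C^\infty([0,1])$ and $\|f^{(z)}\| < \infty$ for every $z \in \NN_0$, in particular $\|f\| = \sum_{r=1}^\infty |f_r| < \infty$, so $f$ admits a convergent power series expansion on $[0,1]$ of the form required by Theorem~\ref{thm_deltainfty}. Second, substitute $u = v$ into the bound of Theorem~\ref{thm_deltainfty}; observe that for $v \ge 1$ we have $\min(1,v) = 1$ and $\max(0,v-1) = v-1$, which produces the constant $\tfrac{3}{(2\sin(\pi/b))^v}\bigl(1 + \tfrac{1}{b} + \tfrac{1}{b(b+1)}\bigr)^{v-1}$. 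Third, recognize
\begin{equation*}
\sum_{r=v}^\infty \frac{r!\,|f_r|}{(r-v+1)!} = \|f^{(v-1)}\|
\end{equation*}
from the definition given just above the corollary.

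One small point to check is the boundary case $v = 0$, i.e.\ $k = 0$. In that case $\mu_0(k) = 0$ and $\wal_0 = 1$, so $\hat{f}(0) = \int_0^1 f(x)\rd x$; the stated inequality reads $|\hat{f}(0)| \le \tfrac{3}{1}\cdot 1^{-1}\cdot\|f^{(-1)}\|$, which is not meaningful unless we restrict to $v \ge 1$. The corollary is naturally understood for $k \in \NN$ with $v \ge 1$ (as specified in the notation conventions at the start of the paper, where $v = 0$ only for $k = 0$), so this is not an obstacle but merely a convention to note.

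There is no real difficulty here: the corollary is essentially a repackaging of Theorem~\ref{thm_deltainfty} in the norm notation $\|\cdot\|$, so the only thing resembling an obstacle is making sure the identification of the two constants and the two sums is done cleanly and that the hypothesis $\|f^{(v-1)}\| < \infty$ is used to ensure the bound is nontrivial. The heavy lifting was already accomplished in Lemma~\ref{lem_bound_Walsh_xr} and Theorem~\ref{thm_deltainfty}.
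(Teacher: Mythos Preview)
Your proposal is correct and matches the paper's own derivation: the paper simply states that the corollary follows from Theorem~\ref{thm_deltainfty}, having just computed $\|f^{(v-1)}\| = \sum_{r=v}^\infty \frac{r!\,|f_r|}{(r-v+1)!}$, which is exactly your step of choosing $u=v$ and identifying the sum. Your remarks about checking the power-series hypothesis and the $k\in\NN$ (hence $v\ge 1$) convention are appropriate caveats but do not change the argument.
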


Let us consider another example: Let $f_r = r^{-\delta}$, with
$\delta > 1$. So, for example, we can choose $u = \min(v, \lceil
\delta \rceil -2)$ in the theorem above, which will guarantee that
$\sum_{r = v}^\infty \frac{r! |f_r|}{(r-u+1)!} < \infty$. On the
other hand, this sum is not finite for $\lceil \delta \rceil-2 < u
\le v$. The theorem implies that the Walsh coefficients decay with
order $\mathcal{O}(b^{-\mu_{\min(v, \lceil\delta \rceil - 2)}(k)})$.
Note that this function $f$ is only $\lceil \delta \rceil - 2$ times
continuously differentiable. We will consider this case in the next
section.

\section{On the Walsh coefficients of functions in $C^r([0,1])$}

In this section we prove an explicit constant $C_{r}$, which is better than the constant
which can be obtained from \cite{Dick06}.

Before the next lemma we introduce a variation of fractional order:
For $0 < \lambda \le 1$ and $f:[0,1]\rightarrow \real$ let
\begin{equation*}
V_\lambda(f) = \sup_{0 = x_0 < x_1 < \ldots < x_{N-1} < x_N =  1} \sum_{n=1}^N |x_{n}- x_{n-1}| \frac{|f(x_n) - f(x_{n-1})|}{|x_n-x_{n-1}|^\lambda},
\end{equation*}
where the supremum is taken over all partitions of the interval $[0,1]$.

If $f$ has a continuous first derivative on $[0,1]$, then
\begin{equation*}
V_1(f) = \int_0^1 |f'(x)| \rd x.
\end{equation*}
If $f$ satisfies a H\"older condition of order $0 < \lambda \le 1$, i.e., $|f(x)-f(y)| \le C_f |x-y|^\lambda$ for all $x,y \in [0,1]$, then $V_\lambda(f) \le C_f$.

The following lemma appeared already in \cite{Fine} (albeit in a slightly different form, see also \cite{Dick06,pirsic}).
\begin{lemma}\label{lem_boundVf}
Let $0 < \lambda \le 1$ and let $f \in \LL_2([0,1])$ satisfy
$V_\lambda(f) < \infty$. Then for any $k \in \NN$, the $k$th Walsh coefficient satisfies
\begin{equation*}
|\hat{f}(k)| \le (b-1)^{1+\lambda} b^{-\lambda a_1} V_\lambda(f).
\end{equation*}
\end{lemma}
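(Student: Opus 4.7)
The plan is to exploit two structural features of $\wal_k$: it is constant on $b$-adic intervals of length $b^{-a_1}$, while its integral over each $b$-adic interval of length $b^{1-a_1}$ vanishes. First, I would partition $[0,1)$ into the $b^{a_1-1}$ intervals $J_m = [m b^{1-a_1}, (m+1)b^{1-a_1})$, and each $J_m$ further into the $b$ sub-intervals $J_{m,j} = [m b^{1-a_1}+j b^{-a_1}, m b^{1-a_1}+(j+1)b^{-a_1})$. On $J_{m,j}$, $\wal_k$ takes the constant value $C_m\,\omega_b^{\kappa_1 j}$, where the unimodular factor $C_m$ depends on $m$ only through the lower digits $x_{a_2},\ldots,x_{a_v}$. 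Since $1\le \kappa_1\le b-1$, the sum $\sum_{j=0}^{b-1}\omega_b^{\kappa_1 j}$ vanishes, which is the zero-integral property on $J_m$ and the source of all cancellation.

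Applying the mean value theorem to write $\int_{J_{m,j}} f(x)\rd x = b^{-a_1} f(\tau_{m,j})$ for some $\tau_{m,j}\in J_{m,j}$, I obtain
$$\hat{f}(k) = b^{-a_1}\sum_{m=0}^{b^{a_1-1}-1}\overline{C_m}\sum_{j=0}^{b-1}\omega_b^{-\kappa_1 j}\,f(\tau_{m,j}).$$
I would then handle the inner sum by Abel summation with partial sums $S_j = \sum_{s=0}^{j}\omega_b^{-\kappa_1 s}$. Because $S_{b-1}=0$, the boundary term cancels and the inner sum collapses to $\sum_{j=0}^{b-2} S_j\,[f(\tau_{m,j})-f(\tau_{m,j+1})]$, with the trivial bound $|S_j|\le b-1$ (or the sharper $|1-\omega_b^{-\kappa_1}|^{-1}$ from Lemma~\ref{lem_elementary} if a better constant is needed).

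Finally, choosing the $\tau_{m,j}$ as the midpoints of $J_{m,j}$ makes the full collection, in natural order, a partition of $[0,1]$ with uniform spacing $b^{-a_1}$. The within-$J_m$ differences form a subset of the consecutive differences of this partition, so the definition of $V_\lambda$ applied to this partition yields $\sum_{m,j}|f(\tau_{m,j+1})-f(\tau_{m,j})|\le b^{a_1(1-\lambda)} V_\lambda(f)$. Assembling the three ingredients produces a bound of the desired form $|\hat{f}(k)|\le c\,b^{-\lambda a_1}V_\lambda(f)$. The main obstacle is pinning down the constant to match $(b-1)^{1+\lambda}$ exactly: the crude bound $|S_j|\le b-1$ produces the factor $(b-1)$, and the extra $(b-1)^\lambda$ should emerge from a sharper allocation between the $b^{-a_1}$ factor out front and the $b^{a_1(1-\lambda)}$ factor from $V_\lambda$—for example, by applying $V_\lambda$ to a coarser partition whose spacings can be as large as $(b-1)b^{-a_1}$, so that the fractional-order character of the variation directly produces a $\lambda$-power of $(b-1)$. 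This is bookkeeping rather than a conceptual difficulty.
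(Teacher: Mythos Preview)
The paper does not prove this lemma: it is stated with a citation to \cite{Fine} (and \cite{Dick06,pirsic}) and treated as known, so there is no proof in the paper to compare against. On its own merits, your outline has the right architecture but contains a genuine gap at the mean-value step. You write $\int_{J_{m,j}}f=b^{-a_1}f(\tau_{m,j})$ ``for some $\tau_{m,j}$'' via the mean value theorem for integrals, and then later ``choose the $\tau_{m,j}$ as midpoints''. These are incompatible: the MVT (which already requires continuity of $f$, not implied by $V_\lambda(f)<\infty$) hands you a $\tau_{m,j}$ you cannot choose. With the $\tau$'s coming from MVT, consecutive ones can lie arbitrarily close to the shared boundary of $J_{m,j}$ and $J_{m,j+1}$, so for $\lambda<1$ the weight $(\tau_{m,j+1}-\tau_{m,j})^{1-\lambda}$ in the $V_\lambda$-sum can be arbitrarily small and the partition inequality no longer bounds $\sum|f(\tau_{m,j+1})-f(\tau_{m,j})|$. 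If instead you fix midpoints to force uniform spacing, the identity $\int_{J_{m,j}}f=b^{-a_1}f(\tau_{m,j})$ is simply false, and the residuals $\int_{J_{m,j}}(f-f(\tau_{m,j}))$ are exactly the kind of terms a single global use of $V_\lambda$ does not absorb.

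A clean way through avoids point evaluation entirely. After Abel summation on the integrals $I_{m,j}=\int_{J_{m,j}}f$ one has $I_{m,j}-I_{m,j+1}=\int_{J_{m,j}}(f(x)-f(x+b^{-a_1}))\rd x$, hence $|\hat f(k)|\le(b-1)\int_0^{1-b^{-a_1}}|f(x+b^{-a_1})-f(x)|\rd x$. Now for each $t\in[0,b^{-a_1})$ the shifted grid $\{t+ib^{-a_1}\}_i$, with $0$ and $1$ appended, is an admissible partition with interior spacing $b^{-a_1}$; the $V_\lambda$-inequality (dropping the two nonnegative edge terms) gives $\sum_i|f(t+(i+1)b^{-a_1})-f(t+ib^{-a_1})|\le b^{a_1(1-\lambda)}V_\lambda(f)$, and integrating this over $t\in[0,b^{-a_1})$ yields $\int_0^{1-b^{-a_1}}|f(x+b^{-a_1})-f(x)|\rd x\le b^{-\lambda a_1}V_\lambda(f)$. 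Thus $|\hat f(k)|\le(b-1)b^{-\lambda a_1}V_\lambda(f)$, even sharper than the stated constant. The averaging over the shift $t$ is the missing idea: it turns one global use of $V_\lambda$ into control of the integral, whereas bounding each $|f(x+h)-f(x)|$ separately by $V_\lambda(f)h^{\lambda-1}$ and then integrating is too wasteful and produces a factor $b^{a_1(1-\lambda)}$ that grows with $a_1$.
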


Thus, the decay of the Walsh coefficients of functions with smoothness $0 < r \le 1$ has already been considered and we deal with $r > 1$ in the following.

Let now $f \in \LL_2([0,1])$ with $V_\lambda(f) < \infty$ and let
$F_1(x) = \int_0^x f(y) \rd y$. Then using integration by parts as
in the previous section, we obtain for $k \in\NN$
\begin{equation*}
\hat{F_1}(k) = \int_0^1 F_1(x) \overline{\wal_k(x)} \rd x = -
\int_0^1 f(x) J_k(x) \rd x.
\end{equation*}
Substituting the Walsh series for
$J_k$ from Lemma~\ref{lem_Jk}, we obtain
\begin{eqnarray*}
\hat{F_1}(k) & = & -b^{-a_1}\bigg((1-\omega_b^{-\kappa_1})^{-1}
\hat{f}(k') + (1/2+(\omega_b^{-\kappa_1}-1)^{-1}) \hat{f}(k) \\ &&
\qquad\quad + \sum_{c=1}^\infty \sum_{\vartheta=1}^\infty b^{-c}
(\omega_b^\vartheta-1)^{-1} \hat{f}(\vartheta b^{a_1+c-1} + k) \bigg).
\end{eqnarray*}
Taking the absolute value on both sides and using the same estimations as in the previous section, we obtain
\begin{equation}\label{ineq_boundhatF}
|\hat{F_1}(k)| \le \frac{b^{-a_1}}{2\sin\frac{\pi}{b}} (
|\hat{f}(k')| + |\hat{f}(k)|  + (b-1) \sum_{c=1}^\infty b^{-c}
|\hat{f}(\vartheta b^{a_1+c-1} + k)|).
\end{equation}

Thus, using Lemma~\ref{lem_boundVf}  we obtain for $k \in\NN$ with $v \ge 2$, that
\begin{equation*}
|\hat{F_1}(k)| \le b^{-a_1 - \lambda a_2} V_\lambda(f)
\frac{(b-1)^{1+\lambda}}{2 \sin\frac{\pi}{b}} (1+2 b^{-\lambda}).
\end{equation*}

For $k = \kappa_1 b^{a_1-1}$ we obtain
\begin{equation*}
|\hat{F_1}(k)| \le  \frac{b^{-a_1}}{2
\sin\frac{\pi}{b}} \left(|\hat{f}(0)| + 2(b-1)^{1+\lambda} b^{-\lambda a_1} V_\lambda(f) \right).
\end{equation*}

Defining $F_r(x) = \int_0^x F_{r-1}(y) \rd y$ for $r \ge 1$, we can
obtain bounds on the Walsh coefficients of $F_r$ by using induction
on $r$. Using similar arguments as in the proof of Lemma~\ref{lem_bound_Walsh_xr} we obtain for $v > r$ that
\begin{equation}\label{eq_walshcoeffvr}
|\hat{F_r}(k)| \le b^{-\mu_r(k) -\lambda a_{r+1}} V_\lambda(f)
\frac{(b-1)^{1+\lambda} (1+2 b^{-\lambda})}{(2 \sin\frac{\pi}{b})^r} \left(1 + \frac{1}{b} + \frac{1}{b(b+1)}\right)^{r-1},
\end{equation}
and for $ v = r$ that
\begin{eqnarray}\label{eq_walshcoeffveqr}
\lefteqn{ |\hat{F_r}(k)| } \\ & \le & \frac{b^{-\mu_r(k)}}{(2\sin\frac{\pi}{b})^r} \left(1+\frac{1}{b} + \frac{1}{b(b+1)}\right)^{r-1}  \left(|\hat{f}(0)| + 2(b-1)^{1+\lambda} b^{-\lambda a_r} V_\lambda(f) \right). \nonumber
\end{eqnarray}
For $1 \le v < r$ we have
\begin{eqnarray}\label{eq_walshcoeffrv}
|\hat{F_r}(k)| & \le &  \frac{b^{-\mu_r(k)}}{(2\sin\frac{\pi}{b})^v} \left(1+\frac{1}{b} +\frac{1}{b(b+1)}\right)^{v-1} \nonumber \\ && \times \left(|\hat{F}_{r-v}(0)| + 2 (b-1)^{1+\lambda} b^{-\lambda a_v} V_\lambda(F_{r-v}) \right).
\end{eqnarray}

Note that we also have $F_r(x) = \int_0^1 f(t)
\frac{(x-t)_+^{r-1}}{(r-1)!} \rd t$, where $(x-t)_+^{r-1} =
(x-t)^{r-1} 1_{[0,x)}(t)$ for $0 \le x,t \le 1$ and $1_{[0,x)}(t)$
is $1$ for $t \in [0,x)$ and $0$ otherwise.

A function $f \in C^r([0,1])$ for which
$V_\lambda(f^{(r)}) < \infty$ can be represented by a Taylor series
\begin{equation*}
f(x) = f(0) + \frac{f^{(1)}(0)}{1!} x + \cdots + \frac{f^{(r-1)}(0)}{(r-1)!} x^{r -1} + \int_0^1 f^{(r)}(t) \frac{(x-t)_+^{r -1}}{(r-1)!} \rd t.
\end{equation*}

With this we can now obtain a bound on the Walsh coefficients of $f$. For $v \ge r$ we know from \cite{Dick06} that $$\int_0^1 \left(f(0) + \frac{f^{(1)}(0)}{1!} x + \cdots + \frac{f^{(r-1)}(0)}{(r-1)!} x^{r -1} \right) \overline{\wal_k(x)} \rd x = 0.$$ To bound the Walsh coefficient of $\int_0^1 f^{(r)}(t) \frac{(x-t)_+^{r -1}}{(r-1)!} \rd t$ for $v > r$ we can use (\ref{eq_walshcoeffvr}) to obtain
\begin{equation*}
|\hat{f}(k)| \le  b^{-\mu_r(k) -\lambda a_{r+1}} V_\lambda(f^{(r)})
\frac{(b-1)^{1+\lambda} (1+2 b^{-\lambda})}{(2 \sin\frac{\pi}{b})^r} \left(1+\frac{1}{b} + \frac{1}{b(b+1)}\right)^{r-1}.
\end{equation*}
For $v = r$ we can use (\ref{eq_walshcoeffveqr}) to obtain
\begin{eqnarray*}
|\hat{f}(k)|  & \le & \frac{b^{-\mu_r(k)}}{(2\sin\frac{\pi}{b})^r} \left(1 +\frac{1}{b} + \frac{1}{b(b+1)}\right)^{r-1}  \\ && \qquad\qquad\qquad \times \left(|\hat{f}^{(r)}(0)| + 2(b-1)^{1+\lambda} b^{-\lambda a_r} V_\lambda(f^{(r)}) \right).
\end{eqnarray*}
For $1 \le v < r$ we have
\begin{eqnarray*}
\lefteqn{\left| \int_0^1  \left(f(0) + \frac{f^{(1)}(0)}{1!} x + \cdots + \frac{f^{(r-1)}(0)}{(r-1)!} x^{r -1} \right) \overline{\wal_k(x)} \rd x \right|} \qquad \\  & \le &  b^{-\mu_r(k)}
\frac{3}{(2\sin\frac{\pi}{b})^v} \left(1 + \frac{1}{b} + \frac{1}{b(b+1)} \right)^{v-1} \sum_{s=v}^{r-1}
\frac{|f^{(s)}(0)|}{(s-v+1)!}
\end{eqnarray*}
and therefore, using (\ref{eq_walshcoeffrv}), we obtain
\begin{eqnarray*}
|\hat{f}(k)| & \le & \frac{b^{-\mu_r(k)}}{(2\sin\frac{\pi}{b})^v} \left(1+\frac{1}{b} + \frac{1}{b(b+1)}\right)^{v-1} \\ && \times \left[3 \sum_{s=v}^{r-1} \frac{|f^{(s)}(0)|}{(s-v+1)!}  +    \left(|\hat{f}^{(v)}(0)| + 2 (b-1)^{1+\lambda} b^{-\lambda a_v} V_\lambda(f^{(v)}) \right) \right],
\end{eqnarray*}
where $\hat{f}^{(v)}(0)$ denotes the $0$th Walsh coefficient of $f^{(v)}$. We have shown the following theorem.

\begin{theorem}\label{thm_cr}
Let $f \in C^r([0,1])$ with $V_\lambda(f^{(r)}) < \infty$, and let $k \in\NN$. Then for $v > r$ we have
\begin{equation*}
|\hat{f}(k)| \le  b^{-\mu_r(k) -\lambda a_{r+1}} V_\lambda(f^{(r)})
\frac{(b-1)^{1+\lambda} (1+2 b^{-\lambda})}{(2 \sin\frac{\pi}{b})^r} \left(1 + \frac{1}{b} + \frac{1}{b(b+1)}\right)^{r-1},
\end{equation*}
for $r = v$ we have
\begin{eqnarray*}
|\hat{f}(k)| & \le & \frac{b^{-\mu_r(k)}}{(2\sin\frac{\pi}{b})^r} \left(1 + \frac{1}{b} + \frac{1}{b(b+1)}\right)^{r-1} \\ && \times \left(\left|\int_0^1 f^{(r)}(x) \rd x \right| + 2(b-1)^{1+\lambda} b^{-\lambda a_r} V_\lambda(f^{(r)}) \right),
\end{eqnarray*}
and for $v < r$ we have
\begin{eqnarray*}
\lefteqn{ |\hat{f}(k)| } \\ & \le & \frac{b^{-\mu_r(k)}}{(2\sin\frac{\pi}{b})^v} \left(1 + \frac{1}{b} + \frac{1}{b(b+1)}\right)^{v-1} \\ && \times \left[3 \sum_{s=v}^{r-1} \frac{|f^{(s)}(0)|}{(s-v+1)!} +   \left(\left|\int_0^1 f^{(v)}(x) \rd x\right| + 2 (b-1)^{1+\lambda} b^{-\lambda a_v} V_\lambda(f^{(v)}) \right) \right].
\end{eqnarray*}
\end{theorem}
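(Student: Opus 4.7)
The plan is to decompose $f$ via Taylor's theorem with integral remainder,
\begin{equation*}
f(x) = \sum_{s=0}^{r-1} \frac{f^{(s)}(0)}{s!} x^s + R_r(x), \qquad R_r(x) = \int_0^1 f^{(r)}(t) \frac{(x-t)_+^{r-1}}{(r-1)!} \rd t,
\end{equation*}
and bound the Walsh coefficients of the polynomial and remainder parts separately. The remainder $R_r$ is the $r$-fold iterated integral of $f^{(r)}$ starting at zero, which is exactly the object $F_r$ from the recursion preceding (\ref{eq_walshcoeffvr}) with $f$ there replaced by $f^{(r)}$; consequently the estimates (\ref{eq_walshcoeffvr}), (\ref{eq_walshcoeffveqr}), (\ref{eq_walshcoeffrv}) apply directly to $|\hat{R}_r(k)|$.

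For the polynomial piece I invoke two facts. First, as already used in Section~2 and originating in \cite[Lemma~3.7]{Dick06}, $\int_0^1 x^s \overline{\wal_k(x)} \rd x = 0$ whenever $v > s$, so the whole Taylor polynomial contributes nothing when $v \ge r$; this disposes of the first two cases at once. Second, in the case $v < r$, Theorem~\ref{thm_deltainfty} applied to each monomial $x^s$ with $v \le s \le r-1$ (choosing $u = v$ in the minimum) yields
\begin{equation*}
|\widehat{x^s}(k)| \le b^{-\mu_v(k)} \frac{3}{(2\sin\frac{\pi}{b})^v} \left(1+\frac{1}{b} + \frac{1}{b(b+1)}\right)^{v-1} \frac{s!}{(s-v+1)!},
\end{equation*}
which assembles, via $\hat{T}_{r-1}(k) = \sum_{s=v}^{r-1} \frac{f^{(s)}(0)}{s!} \widehat{x^s}(k)$, into the Taylor-sum term $3\sum_{s=v}^{r-1} |f^{(s)}(0)|/(s-v+1)!$ with exactly the desired prefactor; here I use that $\mu_v(k) = \mu_r(k)$ whenever $v \le r$.

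Combining these ingredients gives the theorem. For $v > r$ and $v = r$ the polynomial contribution vanishes and one reads off the first two displayed bounds from (\ref{eq_walshcoeffvr}) and (\ref{eq_walshcoeffveqr}), with the $|\hat{f}(0)|$ term in (\ref{eq_walshcoeffveqr}) becoming $|\int_0^1 f^{(r)}(x) \rd x|$ under the substitution $f \mapsto f^{(r)}$. For $v < r$, estimate (\ref{eq_walshcoeffrv}) applied to $\hat{R}_r(k)$ produces $\hat{F}_{r-v}(0)$ and $V_\lambda(F_{r-v})$, where $F_{r-v}$ is the $(r-v)$-fold antiderivative of $f^{(r)}$ starting at zero; an elementary computation gives $F_{r-v}(x) = f^{(v)}(x) - \sum_{s=0}^{r-v-1} f^{(v+s)}(0) x^s/s!$, so these quantities differ from $\int_0^1 f^{(v)}(x) \rd x$ and $V_\lambda(f^{(v)})$ only by quantities of the form $\sum_{s=v}^{r-1} |f^{(s)}(0)|/(s-v+1)!$, which are absorbed into the Taylor-sum term already present.

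The main technical step, and the only one beyond bookkeeping, is verifying this last absorption: the polynomial correction between $F_{r-v}$ and $f^{(v)}$ must be shown to contribute at most the same $\sum_{s=v}^{r-1}|f^{(s)}(0)|/(s-v+1)!$ sum, both in its $0$th Walsh coefficient (straightforward from the formula above) and in its fractional variation $V_\lambda$ (handled analogously, since $V_\lambda$ of a polynomial of degree $r-v-1$ can be bounded termwise by the same quantities). Once this matching is carried out, the three displayed inequalities of the theorem follow by direct substitution of the separate bounds for the polynomial and remainder pieces.
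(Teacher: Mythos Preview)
Your overall strategy is exactly the paper's: Taylor expansion of order $r$, Lemma~\ref{lem_bound_Walsh_xr} (equivalently Theorem~\ref{thm_deltainfty} with $u=v$) for the polynomial part, and the iterated-integral estimates (\ref{eq_walshcoeffvr})--(\ref{eq_walshcoeffrv}) for the remainder $R_r$. The cases $v>r$ and $v=r$ go through verbatim.

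For $v<r$ you have in fact spotted a point the paper glosses over: applying (\ref{eq_walshcoeffrv}) to $R_r$ produces $|\hat F_{r-v}(0)|$ and $V_\lambda(F_{r-v})$, where $F_{r-v}$ is the $(r-v)$-fold antiderivative of $f^{(r)}$ vanishing at $0$, and this differs from $f^{(v)}$ by the polynomial $P(x)=\sum_{j=0}^{r-v-1}f^{(v+j)}(0)\,x^j/j!$. However, your ``absorption'' of $P$ into the $3\sum$ term does not recover the stated constant. Already $|\hat F_{r-v}(0)|\le|\int_0^1 f^{(v)}|+\sum_{s=v}^{r-1}|f^{(s)}(0)|/(s-v+1)!$ contributes one extra full copy of the sum, pushing the factor $3$ to at least $4$; and the correction $V_\lambda(P)$ enters multiplied by $2(b-1)^{1+\lambda}b^{-\lambda a_v}$, a $k$-dependent factor that does not match the shape of the third inequality at all. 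So the bookkeeping you defer to the end does not close.

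The clean repair is not to absorb but to change the order of the expansion in this case: for $v<r$, expand $f$ only to Taylor order $v$. The degree-$(v-1)$ polynomial then has vanishing $k$th Walsh coefficient (since $\chi_{s,v}=0$ for $s<v$), and the remainder is precisely the $v$-fold antiderivative of $f^{(v)}$, so (\ref{eq_walshcoeffveqr}) applies directly with $f^{(v)}$ as the base function. This gives
\[
|\hat f(k)|\le \frac{b^{-\mu_v(k)}}{(2\sin\tfrac{\pi}{b})^{v}}\Bigl(1+\tfrac1b+\tfrac1{b(b+1)}\Bigr)^{v-1}\Bigl(\Bigl|\int_0^1 f^{(v)}\Bigr|+2(b-1)^{1+\lambda}b^{-\lambda a_v}V_\lambda(f^{(v)})\Bigr),
\]
which, since $\mu_v(k)=\mu_r(k)$ for $v\le r$, is actually \emph{stronger} than the third inequality of the theorem (the $3\sum$ term becomes superfluous). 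The needed finiteness $V_\lambda(f^{(v)})<\infty$ is immediate from $f^{(v)}\in C^1([0,1])$.
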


We also prove bounds on the decay of the Walsh coefficients of functions from Sobolev spaces. For this, we first need bounds on the Walsh coefficients of Bernoulli polynomials, which we consider in the next section.

\section{On the Walsh coefficients of Bernoulli polynomials}

For $r \ge 0$ let $B_r(\cdot)$ denote the Bernoulli polynomial of
degree $r$ and $b_r(\cdot) = \frac{B_r(\cdot)}{r!}$. For example we
have $B_0(x) = 1$, $B_1(x) = x- 1/2$, $B_2(x) = x^2-x+1/6$ and so
on. Those polynomials have the properties
\begin{equation*}
b'_r(x) = b_{r-1}(x) \quad \mbox{and} \quad \int_0^1 b_r(x) = 0
\quad \mbox{for all } r \ge 1.
\end{equation*}
We obviously have $b_0'(x) = 0$ and $\int_0^1 b_0(x) \rd x = 1$.
Further, $B_r(1-x) = (-1)^r B_r(x)$ and also $b_r(1-x) = (-1)^r
b_r(x)$. The numbers $B_r = B_r(0)$ are the Bernoulli numbers and
$B_r = 0$ for all odd $r \ge 3$. Further, for $r \ge 2$, we have
\begin{equation}\label{eq_Fourierseriesber}
b_r(x) = -\frac{1}{(2\pi\icomp)^r} \sum_{h \in \integer\setminus
\{0\}} h^{-r} \de^{2\pi\icomp h x}, \qquad \mbox{for } 0 \le x \le 1.
\end{equation}

It is more convenient to calculate with $b_r(\cdot)$ rather than the
Bernoulli polynomials.

For $r \ge 1$ and $k \in \NN$ let
\begin{equation*}
\beta_{r,v}(a_1,\ldots, a_v;\kappa_1,\ldots, \kappa_v) = \int_0^1 b_r(x)
\overline{\wal_k(x)} \rd x.
\end{equation*}
As for $\chi_{r,v}$, we also have $\beta_{r,v} = 0$ for $v > r$. Further,
for $k = 0$ let $v=0$ and we have $\beta_{r,0} = 0$ for all $r \ge 1$.

The Walsh series for $b_1$ can be obtained from the Walsh series of
$J_0$ from Lemma~\ref{lem_Jk} and is given by
\begin{equation*}
b_1(x) = x-1/2 = \sum_{c=1}^\infty \sum_{\vartheta=1}^{b-1} b^{-c}
(\omega_b^{-\vartheta}-1)^{-1} \wal_{\vartheta b^{c-1}}(x).
\end{equation*}
Thus $$\beta_{1,1}(a_1;\kappa_1) = -b^{-a_1}
(1-\omega_b^{-\kappa_1})^{-1}.$$

Using integration by parts and $J_k(0) = J_k(1) = 0$ we obtain for
all $r > 1$ that
\begin{equation}\label{eq_bernoulliJk}
\int_0^1 b_r(x) \overline{\wal_k(x)}\rd x = - \int_0^1 b_{r-1}(x)
J_k(x) \rd x.
\end{equation}

Using Lemma~\ref{lem_Jk} and (\ref{eq_bernoulliJk}) we obtain for $1
\le v \le r$ and $r > 1$ that
\begin{eqnarray}\label{eq_recursion_bernoulli}
\lefteqn{ \beta_{r,v}(a_1,\ldots, a_v; \kappa_1,\ldots, \kappa_v) } \\ &
= & -b^{-a_1} \bigg((1-\omega_b^{-\kappa_1})^{-1}
\beta_{r-1,v-1}(a_2,\ldots,a_v;\kappa_2,\ldots, \kappa_v) \nonumber \\
&& \qquad\qquad + (1/2+(\omega_b^{-\kappa_1}-1)^{-1}) \beta_{r-1,v}(a_1,
\ldots, a_v;\kappa_1, \ldots, \kappa_v) \nonumber \\ && \qquad\qquad
+ \sum_{c=1}^\infty \sum_{\vartheta=1}^{b-1} b^{-c} (\omega_b^\vartheta-1)^{-1}
\beta_{r-1,v+1}(a_1+c, a_1,\ldots,a_v; \vartheta,
\kappa_1,\ldots,\kappa_v)\bigg). \nonumber
\end{eqnarray}

From (\ref{eq_recursion_bernoulli}) we can obtain
\begin{equation*}
\beta_{r,r}(a_1,\ldots, a_r;\kappa_1,\ldots, \kappa_r) = (-1)^r
b^{-a_1-\cdots - a_r} \prod_{s=1}^r (1-\omega_b^{-\kappa_s})^{-1}
\end{equation*}
for all $r \ge 1$.

The first few values of $\beta_{r,v}$ are as follows:
\begin{itemize}
\item $r=1$: $\beta_{1,0} = 0$, $\beta_{1,1}(a_1;\kappa_1) = - b^{-a_1} (1-\omega_b^{-\kappa_1})^{-1}$;
\item $r = 2$: $\beta_{2,0} = 0$, $\beta_{2,1}(a_1;\kappa_1) = b^{-2a_1} (1-\omega_b^{-\kappa_1})^{-1} (1/2 + (\omega_b^{-\kappa_1}-1)^{-1})$, $\beta_{2,2}(a_1,a_2;\kappa_1,\kappa_2) = b^{-a_1-a_2} (1-\omega_b^{-\kappa_1})^{-1} (1-\omega_b^{-\kappa_2})^{-1}$;
\end{itemize}

In principle we can obtain all values of $\beta_{r,v}$ recursively
using (\ref{eq_recursion_bernoulli}). We calculated already
$\beta_{r,v}$ for $v = r$
 and we could continue doing so for $v = r-1, \ldots,
1$. But the formulae become increasingly complex, so we only prove a
bound on them.

For any $r \ge 0$ and a non-negative integer $k$ we introduce the function
\begin{equation*}
\mu_{r, {\rm per}}(k) = \left\{\begin{array}{ll} 0 & \mbox{for } r = 0, k \ge 0, \\  0 & \mbox{for } k =  0, r \ge 0, \\ a_1 + \cdots + a_v + (r-v) a_v & \mbox{for } 1 \le v < r, \\ a_1 + \cdots + a_r & \mbox{for } v \ge r. \end{array} \right.
\end{equation*}

\begin{lemma}\label{lem_bound_walsh_bernoulli}
For any $r \ge 2$ and $1 \le v \le r$ we have
\begin{equation*}
|\beta_{r,v}(a_1,\ldots, a_v;\kappa_1,\ldots, \kappa_v)| \le \frac{b^{-\mu_{r, {\rm per}}(k)}}{(2\sin\frac{\pi}{b})^r} \left(1+\frac{1}{b}+\frac{1}{b(b+1)}\right)^{r-2}.
\end{equation*}
\end{lemma}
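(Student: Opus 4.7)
The plan is to prove this by induction on $r$, in the same spirit as Lemma~\ref{lem_bound_Walsh_xr}. The base case $r=2$ is immediate from the closed-form expressions for $\beta_{2,1}$ and $\beta_{2,2}$ displayed just before the statement of the lemma, together with Lemma~\ref{lem_elementary}: both $|\beta_{2,1}(a_1;\kappa_1)|$ and $|\beta_{2,2}(a_1,a_2;\kappa_1,\kappa_2)|$ are bounded by $b^{-\mu_{2,{\rm per}}(k)}/(2\sin\frac{\pi}{b})^2$, which matches the claim since the factor $(1+\frac{1}{b}+\frac{1}{b(b+1)})^{r-2}$ equals $1$ when $r=2$.

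For the inductive step I take absolute values in the recursion~(\ref{eq_recursion_bernoulli}) and apply Lemma~\ref{lem_elementary} exactly as in the passage from~(\ref{eq_recursion}) to~(\ref{eq_recursion_abv}). This splits the bound on $|\beta_{r,v}|$ into three pieces, each carrying an outer factor $\frac{b^{-a_1}}{2\sin\frac{\pi}{b}}$: the contribution of $|\beta_{r-1,v-1}(a_2,\ldots,a_v;\kappa_2,\ldots,\kappa_v)|$, the contribution of $|\beta_{r-1,v}(a_1,\ldots,a_v;\kappa_1,\ldots,\kappa_v)|$, and the series $\sum_{c=1}^\infty\sum_{\vartheta=1}^{b-1}b^{-c}|\beta_{r-1,v+1}(a_1+c,a_1,\ldots,a_v;\vartheta,\kappa_1,\ldots,\kappa_v)|$. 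The inductive hypothesis applies to each piece; the third piece vanishes automatically once $v+1>r-1$, since $\beta_{r-1,v+1}=0$ in that range.

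Writing $C:=1+\frac{1}{b}+\frac{1}{b(b+1)}$, the crux is to check that $b^{-a_1}$ (or $b^{-a_1-c}$ in the third piece) combined with the $b^{-\mu_{r-1,{\rm per}}}$ factor inherited from the inductive bound equals $b^{-\mu_{r,{\rm per}}(k)}$ up to a controlled correction. A short case distinction on whether $v<r-1$, $v=r-1$, or $v\ge r$ shows, after dividing through by the target quantity $\frac{b^{-\mu_{r,{\rm per}}(k)}}{(2\sin\frac{\pi}{b})^r}C^{r-2}$, that the first piece contributes $\frac{1}{C}$ when $v\ge 2$ and $0$ when $v=1$ (since $\beta_{r-1,0}=0$); the second piece contributes at most $\frac{b^{a_v-a_1}}{C}$, which is $\le\frac{1}{bC}$ for $v\ge 2$ and equal to $\frac{1}{C}$ for $v=1$; and the third piece, after evaluating $\sum_{c\ge 1}\sum_{\vartheta=1}^{b-1}b^{-2c}=\frac{1}{b+1}$, contributes at most $\frac{b^{2(a_v-a_1)}}{(b+1)C}$, which is $\le\frac{1}{b(b+1)C}$ for $v\ge 2$ and equal to $\frac{1}{(b+1)C}$ when $v=1$.

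Summing the three normalized contributions, in the generic case $v\ge 2$ one gets $\frac{1}{C}\bigl(1+\frac{1}{b}+\frac{1}{b(b+1)}\bigr)=1$, which is the reason for the particular choice of $C$; the exceptional case $v=1$ gives $\frac{1}{C}\bigl(1+\frac{1}{b+1}\bigr)=\frac{b+2}{(b+1)C}$, which is $\le 1$ because $C\ge\frac{b+2}{b+1}$ reduces to the trivial inequality $b+2\ge b$. The main technical obstacle is the bookkeeping forced by the three slightly different forms that $\mu_{r,{\rm per}}(k)$ takes according as $v<r$, $v=r-1$, or $v\ge r$; once this is handled cleanly, the numerical balance among the three pieces is tight and pins down the somewhat unusual constant $1+1/b+1/(b(b+1))$.
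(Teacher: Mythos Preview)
Your proof is correct and follows essentially the same route as the paper: induction on $r$ with base case $r=2$ verified from the explicit formulae for $\beta_{2,1},\beta_{2,2}$, then taking absolute values in the recursion~(\ref{eq_recursion_bernoulli}), applying Lemma~\ref{lem_elementary}, and bounding the three resulting pieces using the induction hypothesis, with the case $v=1$ handled separately because $\beta_{r-1,0}=0$. The only cosmetic difference is that you track the sharper factors $b^{a_v-a_1}$ and $b^{2(a_v-a_1)}$ whereas the paper uses the weaker $b^{a_2-a_1}$ throughout; both choices collapse to the same final constant $1+\tfrac{1}{b}+\tfrac{1}{b(b+1)}$.
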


\begin{proof}
We prove the bound by induction on $r$. Using Lemma~\ref{lem_elementary} it can easily be seen that the result holds for $r = 2$. Hence assume now that $r > 2$ and the result holds for $r -1$. By taking the absolute value of (\ref{eq_recursion_bernoulli}) and using the triangular inequality together with  Lemma~\ref{lem_elementary} we obtain
\begin{eqnarray*}
\lefteqn{ |\beta_{r,v}(a_1,\ldots, a_v; \kappa_1,\ldots, \kappa_v)| } \\ &
\le & \frac{b^{-a_1}}{2\sin\frac{\pi}{b}} \bigg(
|\beta_{r-1,v-1}(a_2,\ldots,a_v;\kappa_2,\ldots, \kappa_v)| + |\beta_{r-1,v}(a_1,
\ldots, a_v;\kappa_1, \ldots, \kappa_v)| \nonumber \\ && \qquad\qquad
+ \sum_{c=1}^\infty \sum_{\vartheta=1}^{b-1}  b^{-c} |\beta_{r-1,v+1}(a_1+c, a_1,\ldots,a_v; \vartheta,
\kappa_1,\ldots,\kappa_v)|\bigg).
\end{eqnarray*}
We can now use the induction assumption for $|\beta_{r-1,v-1}|, |\beta_{r-1,v}|, |\beta_{r-1,v+1}|$.  Hence, for $v > 1$, we obtain
\begin{eqnarray*}
|\beta_{r,v}(a_1,\ldots, a_v; \kappa_1,\ldots, \kappa_v)|  &
\le & \frac{b^{-\mu_{r, {\rm per}} (k)}}{(2\sin\frac{\pi}{b})^r} \left(1 + \frac{1}{b} + \frac{1}{b(b+1)}\right)^{r-3} \\ && \times  \left(1 + b^{a_2-a_1} + \sum_{c=1}^\infty \sum_{\vartheta=1}^{b-1}  b^{-2 c} b^{a_2-a_1} \right).
\end{eqnarray*}
By noting that $\sum_{c=1}^\infty \sum_{\vartheta=1}^{b-1} b^{-2c} = \frac{1}{b+1}$, and $a_1 > a_2$ we obtain the result.

For $v = 1$ note that $\beta_{r,0} = 0$. In this case we have
\begin{eqnarray*}
|\beta_{r,1}(a_1; \kappa_1)|  &
\le & \frac{b^{-\mu_{r, {\rm per}} (k)}}{(2\sin\frac{\pi}{b})^r} \left(1 + \frac{1}{b} + \frac{1}{b(b+1)}\right)^{r-3}  \left(1 + \sum_{c=1}^\infty \sum_{\vartheta=1}^{b-1}  b^{-2 c} \right) \\ & \le &  \frac{b^{-\mu_{r, {\rm per}} (k)}}{(2\sin\frac{\pi}{b})^r} \left(1 + \frac{1}{b} + \frac{1}{b(b+1)}\right)^{r-2},
\end{eqnarray*}
which implies the result.
\end{proof}

The $b_r$ are polynomials, but using (\ref{eq_Fourierseriesber}) we can extend $b_r$ periodically so that it is defined on $\real$. We denote those functions by $\widetilde{b}_r$. Then for $r \ge 1$ we have
\begin{equation*}
\widetilde{b}_{2r}(x) = \frac{2 (-1)^{r+1}}{(2\pi)^{2r}} \sum_{h=1}^\infty h^{-2r} \cos 2 \pi h x \qquad \mbox{for } x \in \real,
\end{equation*}
and
\begin{equation*}
\widetilde{b}_{2r+1}(x) = \frac{2 (-1)^{r+1}}{(2\pi)^{2r+1}} \sum_{h=1}^\infty h^{-2r-1} \sin 2 \pi h x \qquad \mbox{for } x \in \real.
\end{equation*}
From this it can be seen that $\widetilde{b}_{r}(x) = (-1)^r \widetilde{b}_{r}(-x)$ for all $r \ge 2$. Note that for $x,y \in [0,1]$ we have $b_{2r}(|x-y|) = \widetilde{b}_{2r}(x-y)$ and $b_{2r+1}(|x-y|) = (-1)^{1_{x < y}} \widetilde{b}_{2r+1}(x-y)$, where $1_{x < y}$ is $1$ for $x < y$ and $0$ otherwise. We also extend $B_r(\cdot)$ periodically to $\real$, which we denote by $\widetilde{B}_r(\cdot)$.

In the next section we will also need a bound on the Walsh coefficients of $\widetilde{b}_r(x-y)$. For $k,l \ge 0$ let
\begin{eqnarray}\label{def_gammakl}
\gamma_{r}(k,l)  & = & \int_0^1 \int_0^1 \widetilde{b}_r(x-y) \overline{\wal_k(x)} \wal_l(y) \rd x \rd y  \\ & = &
-\frac{1}{(2\pi\icomp)^r} \sum_{h \in \integer\setminus\{0\}} h^{-r} \tau_{h,k} \overline{\tau_{h,l}}, \nonumber
\end{eqnarray}
where
\begin{equation*}
\tau_{h,k} = \int_0^1 \de^{2\pi \icomp h x} \overline{\wal_k(x)} \rd x.
\end{equation*}
We have $\gamma_r(k,0) = \gamma_r(0,l) = 0$ for all $k,l \ge 0$, as $\int_{z}^{1+z} \widetilde{b}_r(x) \rd x = 0$ for any $z \in \real$. Further we have $\gamma_r(l,k) = (-1)^r \overline{\gamma_r(k,l)}$ and therefore also $|\gamma_r(k,l)| = |\gamma_r(l,k)|$.

We obtain bounds on $\gamma_r$ by induction. In the next lemma we calculate the values of $\gamma_2$.

\begin{lemma}\label{lem_eq_gamma2}
For all $k, l \ge 0$ we have $\gamma_2(k,0) = \gamma_2(0,l) = 0$. For $k,l > 0$ we have
\begin{equation*}
\gamma_2(k,l) = \left\{\begin{array}{ll}  b^{-2a_1} \left(\frac{1}{2 \sin^2 \kappa_1 \pi/b} - \frac{1}{3} \right) & \mbox{if } k = l, \\   b^{-a_1-d_1} (\omega_b^{-\kappa_1}-1)^{-1} (\omega_b^{\lambda_1}-1)^{-1} & \mbox{if } k' = l' > 0, \\ & \mbox{and } k \neq l, \\ b^{-a_1-d_1} (1/2+(\omega_b^{-\lambda_1}-1)^{-1})(\omega_b^{-\kappa_1}-1)^{-1} & \\ + b^{-2a_1} (1/2+(\omega_b^{\kappa_1}-1)^{-1})(1-\omega_b^{-\kappa_1})^{-1} & \mbox{if } k' = l, \\  b^{-a_1-d_1} (1/2+(\omega_b^{\kappa_1}-1)^{-1})(\omega_b^{\lambda_1}-1)^{-1} & \\ + b^{-2d_1} (1/2+(\omega_b^{-\lambda_1}-1)^{-1})(1-\omega_b^{\lambda_1})^{-1} & \mbox{if } k = l', \\ b^{-a_1-a_2} (1-\omega_b^{-\kappa_2})^{-1} (\omega_b^{-\kappa_1}-1)^{-1} & \mbox{if } k'' =  l,  \\ b^{-d_1-d_2} (1-\omega_b^{\lambda_2})^{-1} (\omega_b^{\lambda_1}-1)^{-1} & \mbox{if } k = l'',  \\ 0 & \mbox{otherwise}.  \end{array}\right.
\end{equation*}
\end{lemma}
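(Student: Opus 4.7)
The plan is to reduce $\gamma_2(k,l)$ to a Parseval-type sum built from the Walsh series of $J_k$ and $J_l$ (Lemma~\ref{lem_Jk}), and then to verify the listed cases by a digit-matching argument.

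First I would use $b_2(1-t)=b_2(t)$ to write $\widetilde{b}_2(x-y)=b_2(|x-y|)=\tfrac{1}{2}(x-y)^2-\tfrac{1}{2}|x-y|+\tfrac{1}{12}$. For $k,l>0$ the constant term and the pure $x^2$, $y^2$ pieces of $(x-y)^2$ integrate to zero against $\overline{\wal_k}\wal_l$ by orthogonality of the Walsh functions to $1$, and the remaining $-xy$ contribution factorises as $-\chi_{1,v}(a_1,\ldots;\kappa_1,\ldots)\,\overline{\chi_{1,w}(d_1,\ldots;\lambda_1,\ldots)}$. For the $|x-y|$ piece I would split the $x$-integral at $y$ and integrate by parts, using $J_k(0)=J_k(1)=0$ for $k>0$, to rewrite the inner integral as $2\int_0^y J_k - \int_0^1 J_k$; a second integration by parts in $y$ against $\wal_l$, using $\int_0^1\wal_l(y)\rd y = 0$ for $l>0$, yields the identity
\begin{equation*}
\gamma_2(k,l) = -\chi_{1,v}\,\overline{\chi_{1,w}} + \int_0^1 J_k(x)\overline{J_l(x)}\rd x \qquad (k,l>0).
\end{equation*}

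For the remaining inner product I would substitute the Walsh expansion of $J_k$ and $J_l$ from Lemma~\ref{lem_Jk} and invoke orthonormality $\int_0^1 \overline{\wal_n}\wal_m \rd x = \delta_{n,m}$ to obtain $\int_0^1 J_k\overline{J_l}\rd x = \sum_{n \in S_k \cap S_l} c_n(k)\overline{c_n(l)}$, where $S_k = \{k',k\} \cup \{\vartheta b^{a_1+c-1}+k : c\ge 1,\ 1\le \vartheta \le b-1\}$ is the support of the series for $J_k$ and $c_n(k)$ denotes the coefficient of $\overline{\wal_n}$ in that series.

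The combinatorial core is then the description of $S_k\cap S_l$. Within $S_k$ the three families of indices have pairwise distinct ``positions of the leading digit'' ($a_2$ for $k'$, $a_1$ for $k$, and $a_1+c$ for the enlargement terms), and similarly for $S_l$; matching these across the two sets forces exactly one of the structural relations $k=l$, $k'=l'>0$ with $k\ne l$, $k'=l$, $k=l'$, $k''=l$, $k=l''$, or produces no match at all. For each named case I would read off the surviving index pair(s) and simplify $c_n(k)\overline{c_n(l)}$ using the identities $(1-\omega_b^{-\kappa})^{-1}=-(\omega_b^{-\kappa}-1)^{-1}$, $\overline{(1-\omega_b^{-\kappa})^{-1}}=(1-\omega_b^\kappa)^{-1}$ and $\tfrac{1}{2}+(\omega_b^\kappa-1)^{-1}=-\bigl(\tfrac{1}{2}+(\omega_b^{-\kappa}-1)^{-1}\bigr)$, together with the classical identity $\sum_{\vartheta=1}^{b-1}\csc^2(\pi\vartheta/b)=(b^2-1)/3$ (needed only in the $k=l$ case to collapse the tail $\sum_{c,\vartheta}b^{-2c}|\omega_b^\vartheta-1|^{-2}$).

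The main obstacle is the case bookkeeping: one must rule out spurious matches between the enlargement terms of $S_k$ and those of $S_l$ (comparison of the largest digit position forces the extra top digit and the underlying integer to coincide, returning to $k=l$), and one must verify that the ``otherwise'' outcome $\gamma_2(k,l)=0$ is consistent with the degenerate sub-case $v=w=1$, $k\ne l$: there the lone common index $n=0$ contributes $c_0(k)\overline{c_0(l)}=\chi_{1,1}\overline{\chi_{1,1}}$, which exactly cancels the first term in the reduction above.
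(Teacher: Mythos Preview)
Your reduction is correct and your case analysis via the supports $S_k\cap S_l$ will recover the stated table, but the route is genuinely different from the paper's.  The paper does not integrate by parts in $x$ and $y$ against $\wal_k$ and $\wal_l$; instead it first expands $\widetilde b_2(x-y)$ itself as a double Walsh series.  Concretely, it writes $|x-y|=x+y-2\min(x,y)$ and uses the Parseval identity $\min(x,y)=\sum_{m\ge 0}\overline{J_m(x)}\,J_m(y)$ (coming from $1_{[0,x)}(t)=\sum_m J_m(x)\wal_m(t)$) to obtain, after the obvious cancellations,
\[
\gamma_2(k,l)=\sum_{m=1}^\infty \Bigl(\int_0^1 \overline{J_m}\,\overline{\wal_k}\Bigr)\Bigl(\int_0^1 J_m\,\wal_l\Bigr)\qquad(k,l>0),
\]
and then enumerates, via Lemma~\ref{lem_Jk}, the finitely many $m$ for which the $k$-integral and the $l$-integral are simultaneously nonzero.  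Your formula $\gamma_2(k,l)=-\chi_{1,v}\overline{\chi_{1,w}}+\int_0^1 J_k\overline{J_l}$ is the ``dual'' of this: using $c_n(m)=-\overline{c_m(n)}$ for $m,n\ge 1$ one checks that the paper's $m$-sum equals $\sum_{n\ge 1}c_n(k)\overline{c_n(l)}$, while your $\int J_k\overline{J_l}$ is $\sum_{n\ge 0}c_n(k)\overline{c_n(l)}$; the extra $n=0$ term is precisely $\chi_{1,v}\overline{\chi_{1,w}}$.  The paper's route has the advantage that the $-xy$ correction is absorbed automatically (it cancels against the $m=0$ term of the $\min$ expansion), so there is no separate bookkeeping for the $v=w=1$ degeneracy you flag at the end; on the other hand, your route is more elementary, needing only two integrations by parts and the known Walsh series of $J_k,J_l$, and it makes the digit-matching symmetric in $k$ and $l$ from the outset.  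One small point to watch in your $k=l$ case: when $v=1$ the index $k'=0$ lies in $S_k\cap S_l$ and its contribution $|c_0(k)|^2$ is exactly cancelled by $-|\chi_{1,1}|^2$, whereas for $v>1$ there is no such cancellation; make sure your final closed form reflects this.
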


\begin{proof}
Note that $\gamma_2(k,0) = \gamma_2(0,l) = 0$ for all $k,l \ge 0$, as $\int_z^{1+z} \widetilde{b}_2(x) \rd x = 0$ for any $z \in \real$.

Now assume that $k,l > 0$. The value of $\gamma_2(k,k)$ has been obtained in \cite[Appendix A]{DP} (but can also be obtained from the following).

The Walsh series for $\widetilde{b}_2(x-y) = b_2(|x-y|) =
\frac{(x-y)^2}{2} - \frac{|x-y|}{2} + \frac{1}{6}$ can be calculated
in the following way: We have $x = \overline{J_0(x)}$ and $y =
J_0(y)$ and so
\begin{equation*}
\frac{(x-y)^2}{2} = \frac{(\overline{J_0(x)} - J_0(y))^2}{2}.
\end{equation*}

Further
\begin{equation*}
|x-y| = x + y - 2 \min(x,y) = x + y - 2 \int_0^1 1_{[0,x)}(t) 1_{[0,y)}(t) \rd t,
\end{equation*}
where $1_{[0,x)}(t)$ is $1$ for $t \in [0,x)$ and $0$ otherwise. Note that $J_k(x) = \int_0^x \overline{\wal_k(t)} \rd t = \int_0^1 1_{[0,x)}(t) \overline{\wal_k(t)} \rd t$, which implies $$1_{[0,x)}(t) = \sum_{k=0}^\infty J_k(x) \wal_k(t).$$
Thus
\begin{eqnarray*}
\min(x,y) & = & \int_0^1 1_{[0,x)}(t) 1_{[0,y)}(t) \rd t \\ & = & \sum_{m,n=0}^\infty \overline{J_m(x)} J_n(y) \int_0^1 \overline{\wal_m(t)} \wal_n(t) \rd t \\ & = & \sum_{m=0}^\infty \overline{J_m(x)} J_m(y).
\end{eqnarray*}

The Walsh series for $\widetilde{b}_2(x-y)$ is therefore given by
\begin{equation*}
\widetilde{b}_2(x-y) =  \frac{(\overline{J_0(x)})^2 + (J_0(y))^2 - \overline{J_0(x)} - J_0(y)}{2} + \sum_{m=1}^\infty \overline{J_m(x)} J_m(y) + \frac{1}{6}.
\end{equation*}

 We have
\begin{eqnarray*}
\lefteqn{ \gamma_2(k,l) } \\ & = & \int_0^1 \int_0^1
\widetilde{b}_2(x-y) \overline{\wal_k(x)} \wal_l(y) \rd x \rd y \\ &
= & \int_0^1 \int_0^1 \left[\frac{(\overline{J_0(x)})^2 + (J_0(y))^2
- \overline{J_0(x)} - J_0(y)}{2} + \sum_{m=1}^\infty
\overline{J_m(x)} J_m(y) + \frac{1}{6} \right] \\ && \qquad\qquad
\times \overline{\wal_k(x)} \wal_l(y) \rd x \rd y \\ & = &
\sum_{m=1}^\infty \int_0^1 \overline{J_m(x)}  \overline{\wal_k(x)}
\rd x \int_0^1 J_m(y) \wal_l(y) \rd y.
\end{eqnarray*}
It remains to consider the integral $\int_0^1 \overline{J_m(x)} \overline{\wal_k(x)} \rd x$. Let $m = \eta b^{e-1} + m'$, with $0 < \eta < b$, $e > 0$, and $0 \le m' < b^{e-1}$. Then we have
\begin{eqnarray*}
\lefteqn{\int_0^1 \overline{J_m(x)} \overline{\wal_k(x)} \rd x } \\ & = & b^{-e} \bigg (1-\omega_b^{\eta})^{-1} \int_0^1 \wal_{m'}(x) \overline{\wal_k(x)} \rd x \\ && \qquad + (1/2 + (\omega_b^{\eta}-1)^{-1}) \int_0^1 \wal_m(x) \overline{\wal_k(x)} \rd x \\ && \qquad + \sum_{c=1}^\infty \sum_{\vartheta=1}^{b-1} b^{-c} (\omega_b^{-\vartheta}-1)^{-1} \int_0^1 \wal_{\vartheta b^{e+c-1} + m}(x) \overline{\wal_k(x)} \rd x\bigg).
\end{eqnarray*}
This integral is not $0$ only if either $m' = k$, $m = k$ or $m + \vartheta b^{e+c-1} = k$ for some $\vartheta,c$. Analogously the same applies to the integral $\int_0^1 J_m(y) \wal_l(y) \rd y$. Hence we only need to consider a few cases for which $\gamma_2(k,l)$ is non-zero, and by going through each of them we obtain the result.
\end{proof}

Note that many values for $\gamma_2(k,l)$ are $0$, in particular, if $k$ and $l$ are sufficiently `different' from each other. This property is inherited by $b_r$ for $r > 2$ via the recursion
\begin{eqnarray}\label{eq_recursion_gamma}
\gamma_r(k,l) & = & - b^{-a_1} \bigg((1-\omega_b^{-\kappa_1})^{-1} \gamma_{r-1}(k',l) + (1/2+(\omega_b^{-\kappa}-1)^{-1}) \gamma_{r-1}(k,l) \nonumber \\ && \qquad\quad + \sum_{c=1}^\infty \sum_{\vartheta=1}^{b-1} b^{-c} (\omega_b^{\vartheta} - 1)^{-1} \gamma_{r-1}(\vartheta b^{c+a_1-1} + k, l)\bigg).
\end{eqnarray}
This recursion is obtained from
\begin{equation}\label{eq_intparts_bx}
\gamma_r(k,l)  =   - \int_0^1 \int_0^1 \widetilde{b}_{r-1}(x-y) J_k(x) \wal_l(y) \rd x \rd y,
\end{equation}
which in turn can be obtained using integration by parts. In the following lemma we show that $\gamma_r(k,l) = 0$ for many choices of $k$ and $l$.

\begin{lemma}\label{lem_gamma_0}
\begin{itemize}
\item[1.)] For any $k,l \ge 0$ we have $\gamma_r(k,0) = \gamma_r(0,l) = 0$.
\item[2.)] For $k,l > 0$ with $|v-w| > r$ we have $\gamma_r(k,l) = 0$.
\item[3.)] Let $k, l > 0$ such that $|v-w| \le r$.
\begin{itemize}
\item[(i)] If $v = 1$, but $(\kappa_1, a_1) \neq (\lambda_w, d_w)$, then $\gamma_r(k,l) = 0$.
\item[(ii)] If $w = 1$, but $(\lambda_1, d_1) \neq (\kappa_v, a_v)$, then $\gamma_r(k,l) = 0$.
\item[(iii)] If $r-1 \le |v-w| \le r$, but
\begin{eqnarray*}
\lefteqn{ (a_{v-\min(v,w)+1}, \ldots, a_v,  \kappa_{v-\min(v,w)+1}, \ldots, \kappa_v) } \\ & \neq & (d_{w-\min(v,w)+1}, \ldots, d_w, \lambda_{w-\min(v,w)+1}, \ldots, \lambda_w),
\end{eqnarray*}
then $\gamma_r(k,l) = 0$.
\item[(iv)] If $v, w > 1$ and $0 \le |v-w| \le r-2$, but
\begin{eqnarray*}
\lefteqn{ (a_{v-\min(v,w)+2}, \ldots, a_v,  \kappa_{v-\min(v,w)+2}, \ldots, \kappa_v) } \\ & \neq & (d_{w-\min(v,w)+2}, \ldots, d_w, \lambda_{w-\min(v,w)+2}, \ldots, \lambda_w),
\end{eqnarray*}
then $\gamma_r(k,l) = 0$.
\end{itemize}
\end{itemize}
\end{lemma}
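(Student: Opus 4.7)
The proof is by strong induction on $r$, with base case $r=2$ handled by direct inspection of Lemma~\ref{lem_eq_gamma2}. Part~1 is immediate and uniform in $r$: by Fubini, $\gamma_r(k,0) = \int_0^1 \overline{\wal_k(x)}\bigl(\int_0^1 \widetilde{b}_r(x-y)\rd y\bigr)\rd x$, and the inner integral equals $\int_0^1 \widetilde{b}_r(u)\rd u = 0$ by periodicity and zero mean of $\widetilde{b}_r$; the identity $\gamma_r(0,l)=0$ is symmetric. For the base case of parts~2--3 I go through the seven nonzero entries of Lemma~\ref{lem_eq_gamma2} and check that each corresponds to $|v-w|\le 2$ together with the matching clause that Lemma~\ref{lem_gamma_0} permits at $r=2$.

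For the inductive step $r\ge 3$, I apply the recursion~(\ref{eq_recursion_gamma}) on the first argument,
\[
\gamma_r(k,l) = -b^{-a_1}\Bigl(A\,\gamma_{r-1}(k',l) + B\,\gamma_{r-1}(k,l) + \sum_{c,\vartheta} C_{c,\vartheta}\,\gamma_{r-1}(\vartheta b^{a_1+c-1}+k,\,l)\Bigr),
\]
whose three first arguments have digit counts $v-1,v,v+1$. The key structural observation is that the lower digits $(a_2,\kappa_2),\ldots,(a_v,\kappa_v)$ of $k$ appear unchanged as the lowest digits of each modified first argument, so any matching condition confined to those digits is transmitted intact to the reduced $\gamma_{r-1}$. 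When it is more convenient to peel a digit off $l$, I use instead the analogous recursion in the second variable, obtained from the symmetry $\gamma_r(l,k)=(-1)^r\overline{\gamma_r(k,l)}$. Part~2 follows at once: if $|v-w|>r$ then $|v^*-w|>r-1$ for every $v^*\in\{v-1,v,v+1\}$, so the inductive part~2 kills all three terms.

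For part~3 I handle the subcases. In (i) the violated condition involves only the single digit $(\kappa_1,a_1)$ of $k$, which survives in the $k$ and $\vartheta b^{a_1+c-1}+k$ terms (giving vanishing via case~(i) or~(ii) of the inductive hypothesis, depending on the width of $k^*$ versus $w$), while the $k'$ term has $v^*=0$ and vanishes by part~1. In the $v\ge w$ portions of (iii) and (iv), the condition forced to fail never touches $(a_1,\kappa_1)$, so it is preserved by all three modified indices; depending on where $|v^*-w|$ sits relative to the thresholds $r-2$ and $r-1$, the relevant reduced term falls into case~(iii) or case~(iv) or part~2 of the inductive hypothesis and vanishes. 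Case~(ii) and the $v<w$ portions of (iii) and (iv) are treated symmetrically using the recursion in~$l$, where now it is $(d_1,\lambda_1)$ that is extraneous to the match.

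The chief obstacle is the book-keeping in case~(iv) when $|v-w|$ is strictly small: the induction-hypothesis condition at level~$r-1$ in subcase~(iv) is the shorter match $M_{\min(v^*,w)-1}$, which can be strictly weaker than the violated $M_{\min(v,w)-1}$ assumed at level~$r$. I will argue that whenever this shortening would ``newly satisfy'' the $r-1$ match and thus fail to force vanishing under case~(iv), the same term in fact falls into case~(iii) of the inductive hypothesis (where the required match jumps up to $M_{\min(v^*,w)}$, which is violated) or directly into part~2 via digit-count imbalance; the careful interval comparison of $|v^*-w|$ against $r-2$, $r-1$, $r$ for $v^*\in\{v-1,v,v+1\}$, performed separately for $v\ge w$ and $v<w$, is the combinatorial heart of the proof.
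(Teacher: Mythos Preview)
Your approach mirrors the paper's: both reduce via the recursion~(\ref{eq_recursion_gamma}) to the explicit base case $r=2$ in Lemma~\ref{lem_eq_gamma2}. The paper simply says ``use~(\ref{eq_recursion_gamma}) repeatedly to obtain a sum of $\gamma_2(m_i,n_j)$'' and then invokes Lemma~\ref{lem_eq_gamma2} without further detail; you attempt a one-step induction $r\to r-1$ with explicit bookkeeping. Your outline for parts~1, 2 and 3(i)--(iii) is sound, but there is a genuine gap in your proposed resolution of the ``chief obstacle'' in case~(iv).

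Take $v=w\ge 3$, $r\ge 4$, and suppose the mismatch in the last $v-1$ digits occurs \emph{only} at position~$2$: $(a_2,\kappa_2)\neq(d_2,\lambda_2)$ while $(a_j,\kappa_j)=(d_j,\lambda_j)$ for all $3\le j\le v$. Apply the recursion on $k$. The term $\gamma_{r-1}(k',l)$ has $v^*=v-1$ and $w$ unchanged, so $|v^*-w|=1$. You claim this term is killed by case~(iii) or part~2 of the inductive hypothesis. But part~2 needs $|v^*-w|>r-1$, which is impossible; and case~(iii) at level $r-1$ needs $(r-1)-1\le|v^*-w|$, i.e.\ $r\le 3$. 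So for $r\ge 4$ the term lands squarely in case~(iv) at level $r-1$, where the required mismatch is only on the last $\min(v^*,w)-1=v-2$ digits --- and by construction those \emph{do} match. The inductive hypothesis gives you nothing here, and switching to the recursion on $l$ has the symmetric defect. Hence the one-step induction as you have set it up does not close in this regime; at minimum you would need to unroll further (as the paper sketches) and argue directly from the table in Lemma~\ref{lem_eq_gamma2}, and even then this particular configuration deserves careful scrutiny rather than the blanket interval comparison you propose.
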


\begin{proof}
\begin{itemize}
\item[a.)] This follows from $\int_z^{z+1} \widetilde{b}_r(x) \rd x = 0$ for all $z \in \real$.
\item[b.)] We have $\gamma_2(k,l) = 0$ for $|v-w| > 2$, which follows from Lemma~\ref{lem_eq_gamma2}. Let $r > 2$. Then by repeatedly using (\ref{eq_recursion_gamma}) we can write $\gamma_r(k,l)$ as a sum of $\gamma_2(m_i, n_j)$ for some values $m_i, n_j$, i.e., $\gamma_r(k,l) = \sum_{i,j} a_{i,j} \gamma_2(m_i,n_j)$. But if $|v-w| > 2$, then the difference between the number of digits of $m_i$ and $n_j$ will be bigger than $2$ and hence $\gamma_r(k,l)  = 0$ by Lemma~\ref{lem_eq_gamma2}.
\item[c.)] For $r = 2$ the proof follows again from Lemma~\ref{lem_eq_gamma2}: If $v = 1$ ($w = 1$), then $k'=0$ ($l'=0$ resp.) and we only have the cases $k=l$, $k= l'$ ($l = k'$ resp.), and $k = l''$ ($l = k''$ resp.) for which the result follows. The case $1 \le |v-w| \le 2$ comprises the cases $k' = l$, $k = l'$, $k'' = l$, and $k = l''$. The case $v = w$ can be obtained by considering $k = l$, and $k' = l'$ with $k \neq l$. For $r > 2$, we can again use (\ref{eq_recursion_gamma}) repeatedly to obtain a sum of $\gamma_2(m_i,n_j)$. The result then follows by using Lemma~\ref{lem_eq_gamma2}.
\end{itemize}
\end{proof}

In the following we prove a bound on $|\gamma_r(k.l)|$ for arbitrary $r \ge 2$. We set
\begin{equation*}
\mu_{r, {\rm per}}(k,l) = \max_{0 \le s \le r} \mu_{s, {\rm per}}(k) + \mu_{r-s,{\rm per}}(l).
\end{equation*}


\begin{lemma}\label{lem_bound_gammarkl}
For $r \ge 2$ and $k,l > 0$ we have
\begin{equation*}
|\gamma_{r}(k,l)|  \le  \frac{2 b^{- \mu_{r, {\rm per}}(k,l)}  }{(2\sin\frac{\pi}{b})^{r}} \left(1 + \frac{1}{b} + \frac{1}{b(b+1)} \right)^{r-2}.
\end{equation*}
\end{lemma}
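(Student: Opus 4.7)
The plan is to prove this by induction on $r\ge 2$, mirroring the structure of the proof of Lemma~\ref{lem_bound_walsh_bernoulli}.

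For the base case $r=2$, I would invoke Lemma~\ref{lem_eq_gamma2}, which enumerates the at most seven nonzero shapes of $\gamma_2(k,l)$. Each value listed there is a sum of at most two terms; each term is a $b$-power multiplied by one or two factors of type $(1-\omega_b^{\pm\kappa})^{-1}$ or $(1/2+(\omega_b^{\pm\kappa}-1)^{-1})$, every one of which has modulus at most $1/(2\sin\frac{\pi}{b})$ by Lemma~\ref{lem_elementary}. A case-by-case inspection confirms that in each case the exponent of $b$ equals $-\mu_{2,\mathrm{per}}(k,l)$; the ``sum of two terms'' accounts for the constant $2$ in the target bound, while the factor $(1+\frac{1}{b}+\frac{1}{b(b+1)})^{r-2}$ is vacuous for $r=2$.

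For the inductive step ($r\ge 3$), I take absolute values in the recursion (\ref{eq_recursion_gamma}) and apply Lemma~\ref{lem_elementary} to obtain
\[
|\gamma_r(k,l)| \le \frac{b^{-a_1}}{2\sin\frac{\pi}{b}}\Bigl(|\gamma_{r-1}(k',l)|+|\gamma_{r-1}(k,l)|+\sum_{c=1}^{\infty}\sum_{\vartheta=1}^{b-1} b^{-c}|\gamma_{r-1}(\vartheta b^{c+a_1-1}+k,l)|\Bigr).
\]
Substituting the induction hypothesis for each of the three $\gamma_{r-1}$ terms and pulling out the common factor $2/(2\sin\frac{\pi}{b})^{r-1}\cdot(1+\frac{1}{b}+\frac{1}{b(b+1)})^{r-3}$ together with the external $b^{-a_1}/(2\sin\frac{\pi}{b})$, the task reduces to verifying
\[
b^{a_1}\Bigl(b^{-\mu_{r-1,\mathrm{per}}(k',l)}+b^{-\mu_{r-1,\mathrm{per}}(k,l)}+\sum_{c,\vartheta}b^{-c-\mu_{r-1,\mathrm{per}}(\vartheta b^{c+a_1-1}+k,l)}\Bigr) \le b^{-\mu_{r,\mathrm{per}}(k,l)}\Bigl(1+\frac{1}{b}+\frac{1}{b(b+1)}\Bigr),
\]
which furnishes the final $(1+\frac{1}{b}+\frac{1}{b(b+1)})$ factor needed to promote the exponent from $r-3$ to $r-2$.

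The main obstacle is precisely this exponent bookkeeping, because $\mu_{r,\mathrm{per}}(\cdot,l)=\max_s\mu_{s,\mathrm{per}}(\cdot)+\mu_{r-s,\mathrm{per}}(l)$ is defined through a maximum whose optimizer can shift when the first argument is altered. I would resolve it by choosing, for each of the three summands, a near-optimal split $s$ that tracks the structural change: removing the leading digit of $k$ lowers $\mu_{s,\mathrm{per}}(\cdot)$ by exactly $a_1$, cancelling the external $b^{-a_1}$; keeping $k$ intact costs a factor $b^{a_2-a_1}\le 1/b$, coming from the ``missing'' trailing position in $\mu_{r-1,\mathrm{per}}$ versus $\mu_{r,\mathrm{per}}$ (using $a_1>a_2$); and prepending a digit at position $a_1+c$ contributes an additional $b^{-c-(a_1-a_2)}$, whose geometric sum over $c,\vartheta$ is $\sum_{c,\vartheta}b^{-2c+a_2-a_1} = b^{a_2-a_1}/(b+1)\le 1/(b(b+1))$. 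Adding the three contributions gives exactly $1+\frac{1}{b}+\frac{1}{b(b+1)}$, closing the induction in the same manner as the proof of Lemma~\ref{lem_bound_walsh_bernoulli}.
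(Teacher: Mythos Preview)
Your overall strategy---induction on $r$, base case via Lemma~\ref{lem_eq_gamma2}, inductive step by taking absolute values in~(\ref{eq_recursion_gamma})---matches the paper's proof. However, there is a genuine gap in the inductive step: you never invoke the symmetry between $k$ and $l$. The paper, after deriving~(\ref{ineq_bound_gammakl}) from integration by parts in $x$, points out that integration by parts in $y$ in~(\ref{def_gammakl}) yields an \emph{analogous} recursion with the roles of $k$ and $l$ swapped, and then assumes w.l.o.g.\ $k\ge l$ (so $a_1\ge d_1$) before doing the exponent bookkeeping. This is essential: your claim that ``removing the leading digit of $k$ lowers $\mu_{s,\mathrm{per}}(\cdot)$ by exactly $a_1$'' only controls $\mu_{r,\mathrm{per}}(k,l)$ when the optimal split $s^\ast$ in $\max_s(\mu_{s,\mathrm{per}}(k)+\mu_{r-s,\mathrm{per}}(l))$ satisfies $s^\ast\ge 1$. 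If $l\gg k$ the optimizer can be $s^\ast=0$, and then stripping $a_1$ from $k$ does not recover $b^{-\mu_{r,\mathrm{per}}(k,l)}$. The w.l.o.g.\ assumption $a_1\ge d_1$ is precisely what forces $s^\ast\ge 1$.

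A second, smaller issue: your second and third bullets bring in $a_2$, which is undefined when $v=1$ and in any case not obviously the right quantity once the two-argument $\mu_{r,\mathrm{per}}(k,l)$ is in play. The paper avoids this by proving, under $a_1\ge d_1$, the cruder relations $a_1+\mu_{r-1,\mathrm{per}}(k',l)=\mu_{r,\mathrm{per}}(k,l)$, $a_1+\mu_{r-1,\mathrm{per}}(k,l)>\mu_{r,\mathrm{per}}(k,l)$, and $a_1+\mu_{r-1,\mathrm{per}}(\vartheta b^{a_1+c-1}+k,l)>c+\mu_{r,\mathrm{per}}(k,l)$ directly from the definition; these give the three contributions $1$, $1/b$, and $(b-1)\sum_{c\ge 1}b^{-2c-1}=1/(b(b+1))$ without any reference to $a_2$.
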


\begin{proof}
For $r = 2$ we use Lemma~\ref{lem_eq_gamma2}, and $|1/2+(\omega_b^{-\kappa}-1)^{-1}|,|\omega_b^{-\kappa}-1|^{-1} \le (2 \sin\frac{\pi}{b})^{-1}$ to obtain the result.

Let now $r > 2$. By taking the absolute value of (\ref{eq_recursion_gamma}) and using the triangular inequality together with   $|1/2+(\omega_b^{-\kappa}-1)^{-1}|,|\omega_b^{-\kappa}-1|^{-1} \le (2 \sin\frac{\pi}{b})^{-1}$ we obtain
\begin{eqnarray}\label{ineq_bound_gammakl}
|\gamma_r(k,l)|  & \le &  \frac{b^{-a_1}}{2\sin \frac{\pi}{b}} \bigg(|\gamma_{r-1}(k',l)| + |\gamma_{r-1}(k,l)| \nonumber \\ && \qquad\qquad  + \sum_{c=1}^\infty \sum_{\vartheta=1}^{b-1} b^{-c} |\gamma_{r-1}(\vartheta b^{a_1+c-1} + k,l)|\bigg).
\end{eqnarray}
By using integration by parts with respect to the variable $y$ in (\ref{def_gammakl}) we obtain a similar formula to (\ref{eq_intparts_bx}). Hence there is also an analogue to (\ref{ineq_bound_gammakl}).

W.l.o.g. assume that $k \ge l$ (otherwise use the analogue to (\ref{ineq_bound_gammakl})) and assume that the result holds for $r-1$.
Then
\begin{eqnarray*}
\lefteqn{ |\gamma_r(k,l)| } \\ & \le & \frac{2 b^{-a_1}}{(2 \sin \frac{\pi}{b})^r} \left(1 + \frac{1}{b} + \frac{1}{b(b+1)}\right)^{r-3} \\ &&   \left(b^{-\mu_{r-1,{\rm per}}(k',l)}  + b^{-\mu_{r-1,{\rm per}}(k,l)}  + (b-1) \sum_{c=1}^\infty b^{-c-\mu_{r-1, {\rm per}}(b^{a_1+c-1} + k, l)}\right).
\end{eqnarray*}
We have $a_1 + \mu_{r-1, {\rm per}}(k',l) = \mu_{r, {\rm per}}(k,l)$, $a_1 + \mu_{r-1, {\rm per}}(k,l) > \mu_{r, {\rm per}}(k,l)$, and $a_1 + \mu_{r-1, {\rm per}}(b^{a_1+c-1}+k,l) = 2a_1 + c + \mu_{r-2,{\rm per}}(k,l) > c + \mu_{r,{\rm per}}(k,l)$. Therefore we obtain
\begin{eqnarray*}
|\gamma_r(k,l)| & \le & \frac{2 b^{-\mu_{r,{\rm per}}(k,l)}}{(2\sin\frac{\pi}{b})^r} \left(1+\frac{1}{b} + \frac{1}{b(b+1)}\right)^{r-3}  \left(1 + \frac{1}{b} + \frac{b-1}{b} \sum_{c=1}^\infty b^{-2c} \right).
\end{eqnarray*}
As $\sum_{c=1}^\infty b^{-2c} = (b^2-1)^{-1}$, the result follows.
\end{proof}

\section{On the Walsh coefficients of functions in Sobolev spaces}\label{sec_RKHS}

In this section we consider functions in reproducing kernel Hilbert spaces. We
consider the Sobolev space $\HH_{r}$ of real valued functions $f:[0,1]\rightarrow \real$, for which $r > 1$, and where the inner product is given by
\begin{equation*}
\langle f, g \rangle_{r} = \sum_{s = 0}^{r-1} \int_0^1 f^{(s)}(x)\rd
x \int_0^1 g^{(s)}(x) \rd x + \int_0^1 f^{(r)}(x) g^{(r)}(x) \rd x,
\end{equation*}
where $f^{(s)}$ denotes the $s$th derivative of $f$ and where
$f^{(0)} = f$. Let $\|f\|_r = \sqrt{\langle f, f \rangle_r}$. The
reproducing kernel (see \cite{aron} for more information about
reproducing kernels) for this space is given by
\begin{eqnarray*}
\KK_{r}(x,y) & = & \sum_{s = 0}^{r} \frac{B_s(x)
B_{s}(y)}{(s!)^2} - (-1)^{r}
\frac{\widetilde{B}_{2r}(x-y)}{(2r)!} \\ & = & \sum_{s=0}^r
b_s(x) b_s(y) - (-1)^r \widetilde{b}_{2r}(x-y),
\end{eqnarray*}
see for example \cite[Section~10.2]{wahba}. It can be checked that
\begin{eqnarray*}
f(y) & = & \langle f, \KK_r(\cdot,y)\rangle_r \\ & = &
\sum_{s=0}^{r} \int_0^1 f^{(s)}(x) \rd x \, b_s(y) -
(-1)^r  \int_0^1 f^{(r)}(x) \widetilde{b}_r(x-y) \rd x. 
\end{eqnarray*}

A bound on the Walsh coefficients of $b_0(y), \ldots, b_r(y)$ can be obtained from Lemma~\ref{lem_bound_walsh_bernoulli}. For the remaining term we use Lemma~\ref{lem_bound_gammarkl}. We have
\begin{equation*}
\widetilde{b}_r(x-y) = \sum_{k,l = 1}^\infty \gamma_r(k,l) \wal_k(x) \overline{\wal_l(y)}
\end{equation*}
and therefore the $m$th Walsh coefficient for the last term is given by
\begin{eqnarray*}
\lefteqn{ (-1)^r \int_0^1 \int_0^1 f^{(r)}(x) \widetilde{b}_r(x-y)\rd x \; \overline{\wal_m(y)} \rd y } \\ & = & (-1)^r \sum_{k,l=1}^\infty \overline{\gamma_r(k,l)} \int_0^1 f^{(r)}(x) \overline{\wal_k(x)} \rd x \int_0^1 \wal_l(y) \overline{\wal_m(y)} \rd y \\ & = & \sum_{k=1}^\infty \gamma_r(m,k) \int_0^1 f^{(r)}(x) \overline{\wal_k(x)} \rd x.
\end{eqnarray*}
We can estimate the absolute value of the last expression by
\begin{eqnarray*}
\lefteqn{\sum_{k=1}^\infty |\gamma_r(m,k)| \int_0^1 |f^{(r)}(x)| \rd x } \\ & \le &  \int_0^1 |f^{(r)}(x)| \rd x  \frac{2}{(2\sin\frac{\pi}{b})^{r}}  \left(1 + \frac{1}{b} + \frac{1}{b(b+1)} \right)^{r-2}   \sum_{\satop{k=1}{\gamma_r(m,k) \neq 0}}^\infty b^{-\mu_{r, {\rm per}}(m,k) }.
\end{eqnarray*}
It remains to prove a bound on the rightmost sum, which we do in the following lemma.

\begin{lemma}
For any $r > 1$ and $m \in \NN$ we have
\begin{equation*}
\sum_{\satop{k=1}{\gamma_r(m,k) \neq 0}}^\infty b^{-\mu_{r,{\rm per}}(m,k)} \le  b^{-\mu_{r, {\rm per}}(m)} \left(3 + \frac{2}{b} + \frac{2b+1}{b-1}\right).
\end{equation*}
\end{lemma}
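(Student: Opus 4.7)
By Lemma~\ref{lem_gamma_0}, the $k$'s contributing to the sum are tightly constrained: writing $m$ with $v$ nonzero base-$b$ digits $(\kappa_i,a_i)_{i=1}^v$ and $k$ with $w$ nonzero digits, one must have $w\in\{\max(1,v-r),\ldots,v+r\}$, and either the last $\min(v,w)$ digits of $k$ coincide with those of $m$ (when $|v-w|\in\{r-1,r\}$, Case (iii) of Lemma~\ref{lem_gamma_0}, or when $\min(v,w)=1$, Cases (i)--(ii)), or the last $\min(v,w)-1$ digits coincide (when $|v-w|\le r-2$ and $\min(v,w)\ge 2$, Case (iv)). The ``head'' of $k$ above the matched block consists of $e\in\{0,1,\ldots,w-v+1\}$ free position/value pairs $(\lambda_i,d_i)$ with $\lambda_i\in\{1,\ldots,b-1\}$ and $d_1>\cdots>d_e$ strictly greater than an appropriate threshold (one of $a_1$, $a_2$, $a_v$, or $a_{v-w+2}$ depending on the case). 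My plan is to enumerate these sub-cases by $w$, evaluate the contribution of each to the sum, and recombine.

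The key ingredient is a good lower bound for $\mu_{r,\mathrm{per}}(m,k)=\max_{0\le s\le r}[\mu_{s,\mathrm{per}}(m)+\mu_{r-s,\mathrm{per}}(k)]$. For each sub-case I will choose the index $s$ in the maximum that pairs off the matched low digits of $k$ against the corresponding low digits of $m$; typically this is $s=v-w+1$ when $w\le v$ and $s$ determined analogously when $w>v$. A careful comparison (relying on $a_1>\cdots>a_v$ and that each free $d_i$ exceeds the threshold) shows that this choice dominates all others, yielding a bound of the form
\[
\mu_{r,\mathrm{per}}(m,k)\;\ge\;\mu_{r,\mathrm{per}}(m)+\sum_{i=1}^{e}(d_i-t_i),
\]
where the $t_i$ are the digit positions of $m$ that are ``replaced'' by free digits of $k$ (taken to be $a_r$, $a_v$, $a_1$, or $a_2$ according to the case, and equal to $a_v$ whenever the index would exceed $v$).

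With this in hand, summing first over each $\lambda_i$ gives a factor $(b-1)^e$, and then summing the ordered $d_1>\cdots>d_e$ above the threshold by repeated geometric series $\sum_{d>a}b^{-d}=b^{-a}/(b-1)$ produces a factor that, combined with $b^{-\sum t_i}$ from the lower bound, yields $b^{-\mu_{r,\mathrm{per}}(m)}$ times an explicit constant depending only on $b$ and $e$. Concretely, the ``boundary'' cases where $e=0$ (namely $w\in\{v-r,v-r+1\}$ in Case A1, and $w=1$ when $v\ge 2$) contribute at most $b^{-\mu_{r,\mathrm{per}}(m)}$ each; the single-free-digit Case A2 with threshold $a_r$ (i.e., $w=v-r+2$) also contributes exactly $b^{-\mu_{r,\mathrm{per}}(m)}$; the $w=v$ case gives at most $b^{-\mu_{r,\mathrm{per}}(m)}/b$ (or less); and the cases $w>v$ with $e\ge 1$ free digits above $a_1$ or $a_2$ sum geometrically to $b^{-\mu_{r,\mathrm{per}}(m)}((2b+1)/(b-1))$ after applying $\sum_{c\ge 1}b^{-c}=1/(b-1)$ and $\sum_{c\ge 1}b^{-2c}=1/(b^2-1)$ repeatedly for the nested $d_i$-sums. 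Adding these contributions produces the claimed constant $3+2/b+(2b+1)/(b-1)$, with the ``3'' absorbing the at-most-three boundary cases, ``$2/b$'' absorbing the $w=v$ case (with slack), and the remaining term absorbing the $w>v$ tail.

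\textbf{Main obstacle.} The hard part will be Step 2: establishing the clean lower bound on $\mu_{r,\mathrm{per}}(m,k)$ uniformly across sub-cases. The maximizing $s$ in the definition of $\mu_{r,\mathrm{per}}(m,k)$ depends on the relative sizes of $v$ and $r$, on $w$, and on whether we are in the Case (iii) or (iv) matching regime, and one has to verify case-by-case that the natural candidate ($s=v-w+1$ or an analogue) does indeed attain the maximum and that the resulting exponent decomposes as $\mu_{r,\mathrm{per}}(m)$ plus a sum over the free-digit displacements $d_i-t_i$. Once that decomposition is secured, the geometric-series bookkeeping in Steps 3--4 is routine.
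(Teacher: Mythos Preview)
Your approach is essentially the same as the paper's: both use Lemma~\ref{lem_gamma_0} to restrict the admissible $k$, case-split on the digit count of $k$ relative to that of $m$, lower-bound $\mu_{r,\mathrm{per}}(m,k)$ in each sub-case by a suitable choice of $s$ in the defining maximum, and then sum over the free high-digit positions via geometric series. The paper flips the roles of the letters (it writes $m$ with $z$ digits $(e_i,\eta_i)$ and $k$ with $v$ digits $(a_i,\kappa_i)$) and allocates the individual case contributions to the pieces of the constant $3+2/b+(2b+1)/(b-1)$ somewhat differently from your sketch, but the substance is identical.
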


\begin{proof}
Let $m = \eta_1 b^{e_1-1} + \cdots + \eta_z b^{e_z-1}$, where $0 < \eta_1,\ldots, \eta_z < b$ and $e_1 > \cdots > e_z > 0$. We consider now all natural numbers $k$ for which $\gamma_r(m,k) \neq 0$.
From Lemma~\ref{lem_gamma_0} we know that $\gamma_r(m,k) = 0$ for $|v-z| > r$. Hence we only need to consider the cases where $|v-z| \le r$:
\begin{itemize}
\item $v = \max(z-r,0)$: If $z-r \le 0$, then this case does not occur; otherwise there is only one $k$ for which $\gamma_r(m,k) \neq 0$, and we obtain the summand $b^{-\mu_{r,{\rm per}}(m)}$.

\item $v = \max(z-r+1,0)$: Again if $z-r+1 \le 0$, then this case does not occur; otherwise we can bound this summand from above by $b^{-\mu_{r,{\rm per}}(m)-1}$.

\item $\max(z-r+1,0) < v \le z$: First, let $v = 1$. Then $\kappa_1 = \eta_z$ and $a_1 = e_z$. Therefore $k$ is fixed, $\mu_{r, {\rm per}}(m,k) = \mu_{r,{\rm per}}(m)$, and $b^{-\mu_{r, {\rm per}}(m,k)} = b^{-\mu_{r, {\rm per}}(m)}$.

Let now $v > 1$, which implies $z > 1$ (as $z \ge v$) and $z-v+2 \le r$. In this case $$(a_2, \ldots, a_v, \kappa_2, \ldots, \kappa_v) = (e_{z-v+2}, \ldots, e_z, \eta_{z-v+2}, \ldots, \eta_z).$$ Thus
\begin{eqnarray*}
\mu_{r, {\rm per}}(m,k) & = & \mu_{z-v+1, {\rm per}}(m) + a_1 + \mu_{r-(z-v+2), {\rm per}}(k',k') \\ & \ge & \mu_{r, {\rm per}}(m) + a_1-a_{v-z+r}.
\end{eqnarray*}
Note that $v-z+r > 1$. Let $a'_v = a_1 - a_{v-z+r} > v-z+r-2$. Then the sum over all $k$ for which $1 < v \le z$ is bounded by
\begin{equation*}
b^{-\mu_{r, {\rm per}}(m)} (b-1) \sum_{v = 2}^z  \sum_{a'=v-1}^\infty b^{-a'} \le b^{-\mu_{r, {\rm per}}(m)} \sum_{v=2}^\infty b^{-v+2} \le b^{-\mu_{r, {\rm per}}(m)} \frac{b}{b-1}.
\end{equation*}

\item $z+1 \le v \le z+r-2$: If $z = 1$ then $2 \le v \le r-1$, and, by Lemma~\ref{lem_gamma_0}, we have $\eta_1 = \kappa_v$ and $e_1 = a_v$. In this case $\mu_{r, {\rm per}}(m,k) = \mu_{r, {\rm per}}(k)$ and $\mu_{r, {\rm per}}(k) - \mu_{r, {\rm per}}(m) = (a_1-a_v) + \cdots + (a_v - a_v) + (r-v) (a_v - a_v) = a'_1 + \cdots + a'_{v-1}$, where $a'_i = a_i - a_v$ and $a'_1 > \cdots > a'_{v-1} > 0$. The sum over all $k$ for which $2 \le v \le r-1$, and $\gamma_r(m,k) \neq 0$, is then bounded by
\begin{eqnarray*}
\lefteqn{\sum_{v=2}^{r-1} (b-1)^{v-1} \sum_{a_1 > \cdots > a_{v-1} > a_v = e_1 > 0} b^{-\mu_{r, {\rm per}}(k)} } \\ & \le & b^{-\mu_{r, {\rm per}}(m)} \sum_{v=2}^{r-1} (b-1)^{v-1} \sum_{a'_1 > \cdots > a'_{v-1} > 0} b^{-a'_1 - \cdots - a'_{v-1}} \\ & \le & b^{-\mu_{r, {\rm per}}(m)} \sum_{v=2}^{r-1} b^{-(v-2)} \\ & \le & b^{-\mu_{r, {\rm per}}(m)} \frac{b}{b-1}.
\end{eqnarray*}
For $z > 1$ and $z+1 \le v \le z+r-2$ we have
\begin{equation*}
(a_{v-z+2}, \ldots, a_v, \kappa_{v-z+2}, \ldots, \kappa_v) = (e_2, \ldots, e_z, \eta_2, \ldots, \eta_z)
\end{equation*}
and $v-z+2 \le r$. Thus
\begin{eqnarray*}
\mu_{r,{\rm per}}(m,k)  & = & a_1 + \cdots + a_{v-z+1} + e_1 + \mu_{r-(v-z+2), {\rm per}}(m',m')  \\ & \ge & \mu_{r, {\rm per}}(m) - \mu_{r-1, {\rm per}}(m') + a_1 + \cdots + a_{v-z+1} \\ && + \mu_{r-(v-z+2), {\rm per}}(m',m') \\ & \ge & \mu_{r, {\rm per}}(m) + a'_1 + \cdots + a'_{v-z+1},
\end{eqnarray*}
where $a'_i = a_i - e_{2} = a_i - a_{v-z+2}$ and $a'_1 > \cdots > a'_{v-z+1} > 0$. Thus the sum over all $k$ for which $z+1 \le v \le z+r-2$ and $\gamma_r(m,k) \neq 0$ is bounded by
\begin{eqnarray*}
\lefteqn{ b^{-\mu_{r, {\rm per}}(m)} \sum_{v = z+1}^{z+r-2} (b-1)^{v-z+1} \sum_{a'_1 > \cdots > a'_{v-z+1} > 0} b^{-a'_1 - \cdots - a'_{v-z+1}} }\qquad\qquad\qquad\qquad\qquad\qquad  \\  & \le & b^{-\mu_{r, {\rm per}}(m)} \sum_{v= z+1}^{z+r-2} b^{-1- \cdots - (v-z)} \\ & \le & \frac{b^{-\mu_{r, {\rm per}}(m)}}{b-1}.
\end{eqnarray*}

\item $v = z+r$: In this case $\mu_{r,{\rm per}}(m,k) = a_1 + \cdots + a_r - \mu_{r, {\rm per}}(m) + \mu_{r, {\rm per}}(m)$, where $\mu_{r, {\rm per}}(m) \le r a_{r+1}$. Thus $a_1 + \cdots + a_r - \mu_{r, {\rm per}}(m) \ge (a_1-a_{r+1}) + \cdots + (a_r- a_{r+1})$ and $a_1 > \cdots > a_r > a_{r+1}$. Hence, the sum over all $k$ for which $v = z+r$ is bounded by
\begin{equation*}
(b-1)^r b^{-\mu_{r,{\rm per}}(m)} \sum_{a_1 > \cdots > a_r > 0} b^{-a_1 - \cdots - a_r} \le  b^{-\mu_{r, {\rm per}}(m)} b^{-r (r-1)/2}.
\end{equation*}

\item $v = z+r-1$: In this case $\mu_{r, {\rm per}}(m,k) = a_1 + \cdots + a_r - \mu_{r, {\rm per}}(m) + \mu_{r, {\rm per}}(m)$, where now $a_r = e_1$ and $\kappa_r = \eta_1$ are fixed. Hence, the sum over all $k$ for which $v = z+r-1$ is bounded by
\begin{equation*}
(b-1)^{r-1} b^{-\mu_{r,{\rm per}}(m)} \sum_{a_1 > \cdots > a_{r-1} > 0} b^{-a_1 - \cdots - a_{r-1}}  \le  b^{-\mu_{r, {\rm per}}(m)} b^{-(r-1)(r-2)/2}.
\end{equation*}
\end{itemize}

By summing up the bounds obtained for each case, we obtain the result.
\end{proof}

This implies the following theorem.

\begin{theorem}\label{thm_sobspace}
Let $r > 1$. Then for any $k \in \NN$ we have
\begin{eqnarray*}
|\hat{f}(k)| & \le &  \sum_{s=v}^r \left|\int_0^1 f^{(s)}(x) \rd x \right| \; \frac{b^{-\mu_{s,{\rm per}}(k)}}{(2\sin\frac{\pi}{b})^s} \; \left(1 + \frac{1}{b} + \frac{1}{b(b+1)}\right)^{\max(0,s-2)} \\ && + \int_0^1 |f^{(r)}(x)| \rd x \; \frac{2  b^{-\mu_{r, {\rm per}}(k)}}{(2\sin\frac{\pi}{b})^r} \left(1 + \frac{1}{b} + \frac{1}{b(b+1)}\right)^{r-2} \\ && \qquad\qquad\qquad\qquad\qquad\qquad \times \left(3 + \frac{2}{b} + \frac{2b+1}{b-1}\right),
\end{eqnarray*}
for all $f \in \HH_r$, where for $v > r$  the empty sum $\sum_{s=v}^r$ is defined as $0$.
\end{theorem}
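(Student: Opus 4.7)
The plan is to insert the reproducing-kernel representation
\[
f(y) \;=\; \sum_{s=0}^{r} \Bigl(\int_0^1 f^{(s)}(x)\rd x\Bigr)\, b_s(y) \;-\; (-1)^r \int_0^1 f^{(r)}(x)\,\widetilde{b}_r(x-y)\rd x
\]
and take Walsh coefficients in $y$ termwise. This yields two contributions to $\hat f(k)$: a finite Bernoulli-polynomial sum over $s=0,1,\ldots,r$, and a single integral-kernel term. Each will be handled by a lemma already established in the paper; no new estimates are needed.

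For the Bernoulli piece, the $k$th Walsh coefficient of $b_s(y)$ is exactly $\beta_{s,v}(a_1,\ldots,a_v;\kappa_1,\ldots,\kappa_v)$. Recall from the discussion preceding Lemma~\ref{lem_bound_walsh_bernoulli} that $\beta_{s,v}=0$ whenever $v>s$ (and $\beta_{s,0}=0$ for $s\ge 1$, while $\beta_{0,0}=1$), which automatically cuts the summation down to indices $s\ge v$ and accounts for the empty-sum convention when $v>r$. For $s\ge 2$ one applies Lemma~\ref{lem_bound_walsh_bernoulli} directly, while for $s=1$ one uses the closed form $|\beta_{1,1}(a_1;\kappa_1)|\le b^{-a_1}/(2\sin(\pi/b))$ from Lemma~\ref{lem_elementary}; in both cases the bound matches the first displayed line of the theorem with the exponent $\max(0,s-2)$.

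For the kernel piece, expand
\[
\widetilde{b}_r(x-y) \;=\; \sum_{k,l=1}^\infty \gamma_r(k,l)\,\wal_k(x)\,\overline{\wal_l(y)},
\]
multiply by $\overline{\wal_k(y)}$ (with $k$ playing the role of $m$ in the lemmas) and $f^{(r)}(x)$, and exchange the order of integration and summation. The Walsh orthogonality in $y$ collapses the sum to a single index, producing
\[
\sum_{k'=1}^{\infty} \gamma_r(m,k')\,\widehat{f^{(r)}}(k'),
\]
which in absolute value is bounded by $\int_0^1|f^{(r)}(x)|\rd x$ times $\sum_{k':\gamma_r(m,k')\ne 0}|\gamma_r(m,k')|$. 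The main technical burden has already been absorbed into the previous two lemmas: Lemma~\ref{lem_bound_gammarkl} gives $|\gamma_r(m,k')|\le 2\,b^{-\mu_{r,{\rm per}}(m,k')}(2\sin\tfrac{\pi}{b})^{-r}(1+\tfrac{1}{b}+\tfrac{1}{b(b+1)})^{r-2}$, and the preceding lemma controls $\sum_{k'}b^{-\mu_{r,{\rm per}}(m,k')}$ on the support $\{k':\gamma_r(m,k')\ne 0\}$ by $b^{-\mu_{r,{\rm per}}(m)}(3+\tfrac{2}{b}+\tfrac{2b+1}{b-1})$. Combining these two bounds yields the second displayed term of the theorem, and adding it to the Bernoulli piece completes the proof. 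The only step requiring care is justifying the interchange of the Walsh sums with the integrals against $f^{(r)}$, which follows from $L_2$ convergence of the Walsh expansion of $\widetilde{b}_r(x-y)$ together with the Cauchy--Schwarz inequality; no genuine obstacle arises since all heavy lifting has been packaged into the preceding lemmas.
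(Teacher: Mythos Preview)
Your proposal is correct and follows essentially the same approach as the paper: insert the reproducing-kernel representation of $f$, split $\hat f(k)$ into the Bernoulli-polynomial contributions (handled via $\beta_{s,v}=0$ for $v>s$ together with Lemma~\ref{lem_bound_walsh_bernoulli} and the explicit $\beta_{1,1}$) and the $\widetilde{b}_r$-kernel contribution (handled by expanding in $\gamma_r$, using Lemma~\ref{lem_bound_gammarkl}, and then the summation lemma immediately preceding the theorem). The only detail you add beyond the paper's own presentation is the explicit remark about the $s=1$ case and the justification of the sum--integral interchange, both of which are fine.
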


\begin{remark}
This theorem can easily be generalized to tensor product spaces, for
which the reproducing kernel is just the product of the
one-dimensional kernels.
\end{remark}

\section{On the Walsh coefficients of smooth periodic functions}\label{sec_periodicRKHS}

We consider a subset of the previous reproducing kernel Hilbert space, namely, let $\HH_{r, {\rm per}}$ be the space of all functions $f \in \HH_r$ which satisfy the condition $\int_0^1 f^{(s)}(x) \rd x = 0$ for $0 \le s < r$. This space also has a reproducing kernel, which is given by
\begin{equation*}
\KK_{r, {\rm per}}(x,y)  =  (-1)^{r+1}
\frac{\widetilde{B}_{2r}(x-y)}{(2r)!}  = (-1)^{r+1} \; \widetilde{b}_{2r}(x-y).
\end{equation*}
The inner product is given by
\begin{equation*}
\langle f, g \rangle_{r, {\rm per}} = \int_0^1 f^{(r)}(x) g^{(r)}(x) \rd x.
\end{equation*}
We also have the representation
\begin{equation*}
f(y) =  (-1)^{r+1} \int_0^1 f^{(r)}(x) \widetilde{b}_r(x-y) \rd x.
\end{equation*}

For this space we can obtain an analogue to Theorem~\ref{thm_sobspace}.
\begin{theorem}\label{thm_sobspace_periodic}
Let $r > 1$. Then for any $k \in \NN$ we have
\begin{eqnarray*}
|\hat{f}(k)| & \le & \int_0^1 |f^{(r)}(x)| \rd x  \\ && \times \frac{2  b^{-\mu_{r, {\rm per}}(k)}}{(2\sin\frac{\pi}{b})^r} \left(1 + \frac{1}{b} + \frac{1}{b(b+1)}\right)^{r-2} \left(\frac{7}{2} + \frac{2}{b} + \frac{2b+1}{b-1}\right),
\end{eqnarray*}
for all $f \in \HH_{r,{\rm per}}$.
\end{theorem}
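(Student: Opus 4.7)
The plan is to mirror the proof of Theorem~\ref{thm_sobspace}, with the non-periodic boundary contributions collapsing. Starting from the $\HH_r$-representation derived in Section~\ref{sec_RKHS},
$$
f(y) = \sum_{s=0}^{r}\left(\int_0^1 f^{(s)}(x)\rd x\right) b_s(y) - (-1)^r \int_0^1 f^{(r)}(x)\widetilde{b}_r(x-y)\rd x,
$$
the defining condition $\int_0^1 f^{(s)}(x)\rd x = 0$ for $0 \le s < r$ kills every term with $s<r$, leaving
$$
f(y) = \left(\int_0^1 f^{(r)}(x)\rd x\right) b_r(y) + (-1)^{r+1}\int_0^1 f^{(r)}(x)\widetilde{b}_r(x-y)\rd x.
$$

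Next, take the $k$th Walsh coefficient of both sides, so that $\hat{f}(k)$ splits into a Bernoulli term and a convolution term. The Bernoulli term equals $\left(\int_0^1 f^{(r)}(x)\rd x\right)\beta_{r,v}(k)$; bounding $|\int_0^1 f^{(r)}(x)\rd x|\le\int_0^1|f^{(r)}(x)|\rd x$ and applying Lemma~\ref{lem_bound_walsh_bernoulli} yields
$$
\int_0^1 |f^{(r)}(x)|\rd x \; \frac{b^{-\mu_{r,{\rm per}}(k)}}{(2\sin\frac{\pi}{b})^r}\left(1+\frac{1}{b}+\frac{1}{b(b+1)}\right)^{r-2}.
$$
The convolution term is treated exactly as in the proof of Theorem~\ref{thm_sobspace}: expand $\widetilde{b}_r(x-y) = \sum_{k,l\ge 1}\gamma_r(k,l)\wal_k(x)\overline{\wal_l(y)}$, interchange summation with the integration against $\overline{\wal_k(y)}$, bound the resulting inner integrals pointwise by $\int_0^1|f^{(r)}(x)|\rd x$, and then combine Lemma~\ref{lem_bound_gammarkl} with the tail-sum estimate $\sum_{\satop{l=1}{\gamma_r(k,l)\neq 0}}^\infty b^{-\mu_{r,{\rm per}}(k,l)}\le b^{-\mu_{r,{\rm per}}(k)}\left(3+\frac{2}{b}+\frac{2b+1}{b-1}\right)$ already established in Section~\ref{sec_RKHS}.

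Summing the two contributions and factoring out the common prefactor $\frac{2b^{-\mu_{r,{\rm per}}(k)}}{(2\sin\frac{\pi}{b})^r}\left(1+\frac{1}{b}+\frac{1}{b(b+1)}\right)^{r-2}$, the bracketed constant collapses to $\frac{1}{2}+3+\frac{2}{b}+\frac{2b+1}{b-1} = \frac{7}{2}+\frac{2}{b}+\frac{2b+1}{b-1}$, matching the statement. There is no genuinely new obstacle: every technical lemma needed has already been proved. The only subtle point is to notice that the defining condition of $\HH_{r,{\rm per}}$ does \emph{not} force $\int_0^1 f^{(r)}(x)\rd x$ to vanish, so the Bernoulli contribution $\left(\int_0^1 f^{(r)}(x)\rd x\right) b_r(y)$ must be retained; this extra piece is precisely what pushes the constant from $3$ (the $\gamma_r$-only estimate) up to $\frac{7}{2}$.
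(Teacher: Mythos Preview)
Your proposal is correct and follows exactly the route the paper intends: Theorem~\ref{thm_sobspace_periodic} is stated without proof as ``an analogue to Theorem~\ref{thm_sobspace}'', and your argument is precisely that analogue --- specialize the $\HH_r$-representation, drop the Bernoulli terms with $s<r$, bound the surviving $s=r$ term via Lemma~\ref{lem_bound_walsh_bernoulli} and the convolution term via Lemma~\ref{lem_bound_gammarkl} together with the tail-sum lemma, then add. Your observation that $\int_0^1 f^{(r)}(x)\rd x$ need not vanish (so the $b_r$-contribution survives and supplies the extra $\tfrac12$ that turns $3$ into $\tfrac72$) is exactly the point, and in fact clarifies the paper's own Section~\ref{sec_periodicRKHS}, whose displayed representation $f(y)=(-1)^{r+1}\int_0^1 f^{(r)}(x)\widetilde{b}_r(x-y)\rd x$ tacitly omits this term even though the stated constant $\tfrac72$ requires it.
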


\begin{remark}
This theorem can easily be generalized to tensor product spaces, for
which the reproducing kernel is just the product of the
one-dimensional kernel.
\end{remark}

\begin{remark}
For $2 \le b \le 4$ we have $(2\sin\frac{\pi}{b})^{-1} (1 + \frac{1}{b} + \frac{1}{b(b+1)}) < 1$, and so, for these cases, the constants in the theorems above decrease as $v,r$ increase.
\end{remark}

\begin{remark}
Because the Walsh coefficients considered in this paper converge
fast, the Walsh series for functions $f$ with smoothness $r > 1$
converges absolutely (we have $\sum_{k=0}^\infty b^{-\mu_{r, {\rm
per}}(k)} \le \sum_{k=0}^\infty b^{-\mu_r(k)} < \infty$ for $r > 1$
and $\sum_{k=0}^\infty b^{-a_1-\lambda a_2} < \infty$ for $\lambda >
0$) and we have (see \cite{Dick06})
$$f(x) = \sum_{k=0}^\infty \hat{f}(k) \wal_k(x) \quad\mbox{for } 0
\le x < 1.$$
\end{remark}

\section{Lower bounds}

Fine~\cite[Theorem~VIII]{Fine} proved that the only absolutely
continuous functions whose Walsh coefficients decay faster than
$1/k$ are the constants.

This result can be extended in the following way. Let $f$ have
smoothness $r > 1$ (i.e., the $\lfloor r \rfloor$th derivative has
at least bounded variation of order $r-\lfloor r \rfloor$). Let
$k^{(1)} = k'$, $k^{(2)} = k''$ and $k^{(s)} = k^{(s-1)} - \kappa_s
b^{a_s-1} = \kappa_{s+1} b^{a_{s+1}-1} + \cdots + \kappa_v
b^{a_v-1}$ for $1 \le s < v$ and let $k^{(v)} = 0$.

Using $\int_0^1 f(x) \overline{\wal_k(x)}\rd x = -\int_0^1 f'(x)
J_k(x) \rd x$ and Lemma~\ref{lem_Jk} we have
\begin{eqnarray*}
\hat{f}(k) & = & -b^{-a_1} (1-\omega_b^{-\kappa_1})^{-1}
\hat{f'}(k') - b^{-a_1} (1/2+(\omega_b^{-\kappa_1})^{-1})
\hat{f'}(k)
\\ && - b^{-a_1} \sum_{c=1}^\infty \sum_{\vartheta=1}^{b-1} b^{-c}
(\omega_b^{\vartheta}-1)^{-1} \hat{f'}(\vartheta b^{a_1+c-1}+k),
\end{eqnarray*}
where $\hat{f'}(k)$ denotes the $k$th Walsh coefficient of $f'$.

Using Lemma~\ref{lem_boundVf}  we obtain for fixed $k^{(1)}$ and
$\kappa_1$ that
\begin{equation*}
\lim_{a_1 \rightarrow \infty} b^{a_1} \hat{f}(k) =
-(1-\omega_b^{-\kappa_1})^{-1} \hat{f'}(k^{(1)}).
\end{equation*}
Applying this result inductively we obtain for $s \le \min(\lfloor r
\rfloor,v)$ and fixed $k^{(s)}$ and $\kappa_1,\ldots, \kappa_s$ that
\begin{eqnarray*}
\lefteqn{ \lim_{a_s\rightarrow \infty} b^{a_s} \cdots \lim_{a_2
\rightarrow \infty} b^{a_2} \lim_{a_1\rightarrow \infty} b^{a_1}
\hat{f}(k) } \\ & = & - \lim_{a_s\rightarrow \infty} b^{a_s} \cdots
\lim_{a_2 \rightarrow \infty} b^{a_2} (1-\omega_b^{-\kappa_1})^{-1}
\hat{f'}(k^{(1)}) \\ & = & (-1)^s \prod_{s'=1}^s
(1-\omega_b^{-\kappa_{s'}})^{-1} \hat{f^{(s)}}(k^{(s)}).
\end{eqnarray*}
This implies that if $\hat{f}(k)$ decays faster than $b^{-a_1-\cdots
- a_s}$ for all $k = \kappa_1 b^{a_1-1} + \cdots + \kappa_v
b^{a_v-1}$ with $0 < \kappa_1,\ldots, \kappa_v < b$ and $v \ge s$,
then $\hat{f^{(s)}}(k) = 0$ for all $k$ and therefore $f^{(s)} = 0$,
i.e., $f$ is a polynomial of degree at most $s-1$.

\begin{theorem}
Let $f$ have smoothness $r > 1$. Then if for some $1 \le s \le r$,
$\hat{f}(k)$ decays faster than $b^{-a_1-\cdots - a_s}$ for all $k =
\kappa_1 b^{a_1-1} + \cdots + \kappa_v b^{a_v-1}$ with $0 <
\kappa_1,\ldots, \kappa_v < b$ and $v \ge s$, i.e.,
\begin{equation*}
 \lim_{a_s\rightarrow \infty} b^{a_s} \cdots \lim_{a_2
\rightarrow \infty} b^{a_2} \lim_{a_1\rightarrow \infty} b^{a_1}
\hat{f}(k)  = 0 \quad \mbox{for all } k \mbox{ with } v \ge s,
\end{equation*}
then $f$ is a polynomial of degree at most $s-1$.
\end{theorem}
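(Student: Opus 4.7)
The plan is to leverage the identity derived in the paragraph immediately preceding the statement, in which an iterated limit of $b^{a_1+\cdots+a_s}\hat f(k)$ is shown to equal $(-1)^s \prod_{s'=1}^s (1-\omega_b^{-\kappa_{s'}})^{-1}\,\hat{f^{(s)}}(k^{(s)})$. The theorem thus reduces to converting vanishing of that iterated limit into vanishing of all Walsh coefficients of $f^{(s)}$, and then invoking completeness of the Walsh system.

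First, I would confirm the iterated-limit identity in full rigour. The base case $s=1$ comes from substituting the Walsh series of $J_k$ from Lemma~\ref{lem_Jk} into $\hat f(k) = -\int_0^1 f'(x)\, J_k(x)\rd x$, scaling by $b^{a_1}$, and letting $a_1 \to \infty$. The term with prefactor $1/2+(\omega_b^{-\kappa_1}-1)^{-1}$ contributes a multiple of $b^{-a_1}\hat{f'}(k) \to 0$, and the tail $\sum_{c \ge 1}\sum_\vartheta b^{-c}(\omega_b^\vartheta-1)^{-1}\hat{f'}(\vartheta b^{a_1+c-1} + k)$ is controlled by Lemma~\ref{lem_boundVf}, giving $|\hat{f'}(\vartheta b^{a_1+c-1}+k)| \le (b-1)^{1+\lambda} b^{-\lambda(a_1+c)} V_\lambda(f')$ for some $\lambda > 0$ (available since $f$ has smoothness $r > 1$). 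After multiplying by $b^{a_1}$ this tail is $o(1)$, leaving only the $\hat{f'}(k^{(1)})$ term. Iterating on the derivatives $f^{(2)},\ldots,f^{(s)}$ (legitimate because $s \le \lfloor r \rfloor$, so each successive derivative still has bounded fractional variation) yields the claimed identity.

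Second, the hypothesis makes the left-hand side zero for every admissible $k$ with $v \ge s$. Since $0 < \kappa_{s'} < b$, each factor $(1-\omega_b^{-\kappa_{s'}})^{-1}$ is finite and nonzero, so $\hat{f^{(s)}}(k^{(s)}) = 0$. Allowing $v$ to range over $\{s, s+1, \ldots\}$ and the digits $\kappa_{s+1},\ldots,\kappa_v$ together with the exponents $a_{s+1} > \cdots > a_v > 0$ to vary independently, $k^{(s)}$ attains the value $0$ (when $v=s$) and every positive integer via its base-$b$ expansion (when $v > s$). Hence $\hat{f^{(s)}}(m) = 0$ for all $m \in \NN \cup \{0\}$.

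Finally, completeness of $\{\wal_k\}_{k \ge 0}$ in $L_2([0,1))$ forces $f^{(s)} = 0$ almost everywhere, and since $s \le r$ implies $f^{(s)}$ is continuous, it vanishes identically and $f$ is therefore a polynomial of degree at most $s-1$. The main obstacle in making the argument rigorous is the uniform control of the infinite tail sum in Lemma~\ref{lem_Jk} as $a_j \to \infty$ at each stage of the iteration; the fractional-variation decay supplied by Lemma~\ref{lem_boundVf} is precisely what is needed to justify interchanging limit and sum at every step.
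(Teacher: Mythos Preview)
Your proposal is correct and follows essentially the same route as the paper: derive the iterated-limit identity from integration by parts and Lemma~\ref{lem_Jk}, control the remainder terms via Lemma~\ref{lem_boundVf}, deduce $\hat{f^{(s)}}(m)=0$ for all $m\ge 0$, and invoke completeness of the Walsh system. One small correction: after scaling by $b^{a_1}$, the middle term is a constant multiple of $\hat{f'}(k)$, not $b^{-a_1}\hat{f'}(k)$, so its vanishing as $a_1\to\infty$ also requires Lemma~\ref{lem_boundVf} applied to $f'$ (the leading digit position of $k$ is $a_1$), just as for the tail.
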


\end{document}